\newcommand{\PreserveBackslash}[1]{\let\temp=\\#1\let\\=\temp}
\newcolumntype{C}[1]{>{\PreserveBackslash\centering}p{#1}}
\newcolumntype{R}[1]{>{\PreserveBackslash\raggedleft}p{#1}}
\newcolumntype{L}[1]{>{\PreserveBackslash\raggedright}p{#1}}
\theoremstyle{definition}
\newtheorem{definition}{\vspace{1mm}Definition}%[section]
\theoremstyle{plain}% default
\newtheorem{lemma}[definition]{\vspace{1mm}Lemma}
\newtheorem{theorem}[definition]{\vspace{1mm}Theorem}
\newtheorem{corollary}[definition]{\vspace{1mm}Corollary}
\newtheorem{proposition}[definition]{\vspace{1mm}Proposition}
\newtheorem{example}[definition]{\vspace{1mm}Example}
\newcommand{\conn}{{\copyright}}
\newcommand{\nats}                  {{\mathbb N}}
\newcommand{\Mt}{\mathbb{M}}
\newcommand{\Ut}{\mathbb{U}}
\newcommand{\Ct}{\mathbb{C}}
\newcommand{\KMt}{\mathbb{K}}
\newcommand{\der}    						  {\vartriangleright}
\newcommand{\tuple}[1]                         {{\langle #1\rangle}}
\DeclareMathOperator*{\sub}{\mathsf{sub}}
\newcommand{\qi}{q_{\mathsf{init}}}
\newcommand{\incr}[1]{\textsf{inc}(#1)}
\newcommand{\test}[1]{\textsf{dec}(#1)}
\newcommand{\step}{\mathsf{step}}
\newcommand{\suc}{\mathsf{succ}}
\newcommand{\enc}{\mathsf{enc}}
\newcommand{\n}{\textrm{r}}
\newcommand{\seq}{\mathsf{seq}} 
\newcommand{\mC}{\mathcal{C}}
\newcommand{\Conf}{\mathbf{Conf}} 
\newcommand{\HConf}{\mathbf{HConf}} 
\newcommand{\Rm}{\textbf{Num}}
\newcommand{\zero}{\mathsf{zero}}
\newcommand{\It}{\mathbb{I}}
\newcommand{\limp}          {\Rightarrow}
\DeclareMathOperator*{\inc}{\mathsf{inc}}
\DeclareMathOperator*{\nxt}{\mathsf{nxt}}
\newcommand{\Var}{\mathsf{var}}
\newcommand{\Sub}{\mathsf{sub}}
\newcommand{\sder}{\vdash}
\newcommand{\Thm}{\mathsf{Thm}}
\newcommand{\Val}{\textrm{Val}}
\newcommand{\BVal}{\textrm{BVal}}
 \newcommand{\bvals}{{\mathsf B}}
    \newcommand{\bval}{{\mathsf b}}
\date{}
\title{Equivalence of finite non-deterministic logical matrices is undecidable\thanks{This work was supported by FCT - Funda\c c\~ao para a Ci\^encia e a Tecnologia, I.P. by project reference 
UIDB/50008/2020, and DOI identifier \url{https://doi.org/10.54499/UIDB/50008/2020} .The second author acknowledges the grant PD/BD/135513/2018 by FCT, under the LisMath PhD programme.
}} 
\author{Carlos Caleiro, Pedro Filipe, S\'ergio Marcelino\\
%{\tt \{ccal,smarcel\}@math.tecnico.ulisboa.pt} \\
{SQIG - Instituto de Telecomunica\c c\~oes}\\
{Dep. Matem\'atica - Instituto Superior T\'ecnico}\\
{Universidade de Lisboa, Portugal}}
\begin{document}

\maketitle

\begin{abstract}
The notion of a non-deterministic logical matrix (where connectives are interpreted as multi-functions) extends the traditional semantics for propositional logics based on
logical matrices (where connectives are interpreted as functions). This extension allows for finitely characterizing a much wider class of logics, and has proven decisive in a myriad
of recent compositionality results. In this paper we show that the added expressivity brought by non-determinism also has its drawbacks, and in particular that the problem of determining whether two given finite non-deterministic matrices are equivalent, in the sense that they induce the same logic, becomes undecidable. We also discuss some workable sufficent conditions and particular cases, namely regarding rexpansion homomorphisms and bridges to calculi.
\end{abstract}

\section{Introduction}
Logical matrices are arguably the most widespread semantic structures used to characterize propositional logics~\cite{Wojcicki88,AlgLogBook}. After {\L}ukasiewicz, 
a logical matrix consists of an underlying algebra, functionally interpreting logical connectives over a set of truth-values, together with a designated set of truth-values. 
The logical models (valuations) are obtained by considering homomorphisms from the free-algebra in the matrix similarity type into the underlying algebra of the matrix, and formulas that hold in 
the model are the ones that take designated values. 

However, in recent years, it has become clear that there are advantages in departing from semantics based on logical matrices, by adopting a non-deterministic generalization 
of the standard notion where logical connectives are interpreted by multi-functions instead of functions. Valuations are still defined homomorphically from the free-algebra, but now the valuation of a formula with a certain head connective can be picked non-deterministically from the set of possible values permitted by the multi-function interpreting the connective, instead of being completely determined by the values assigned to its immediate subformulas. The systematic study of non-deterministic logical matrices (Nmatrices) and their applications was initiated in the beginning of this century by Avron and his collaborators~\cite{avr:zam:surveyNDS,Avron:Lev:NDMVS}.  Logical semantics based on Nmatrices are very malleable, allowing not only for finite characterizations of 
logics that do not admit finite-valued semantics based on logical matrices, but also permitting general recipes for various practical problems in logic, an in particular compositionality~\cite{wollic17,newfibring,AvronBook}. Still, as noticed by Avron himself, and quoting Zohar~\cite{ZoharPhD} in his PhD thesis:
{\quote{\it ``An interesting direction for further research is to find a necessary and sufficient criterion for two Nmatrices to induce the same logic ...''}}\\

The pertinence of this question is better understood in contrast to the deterministic case. Indeed, the problem of determining whether two given finite logical matrices are equivalent, in the sense that they induce the same logic, is decidable. Although such a decision procedure cannot be immediately found in the literature, to the best of our knowledge, the result is somewhat folklore and a relatively straightforward corollary of well known properties of logical matrices. However, these results do not extend to the non-deterministic case, as argued also in~\cite{CZE}. \\

The present paper is a natural extension of preliminary results presented in~\cite{FilipeJLC,FilipeNCL}, where we prove the undecidability of two relevant computational problems associated with finite Nmatrices: given a finite Nmatrix, the problem of determining whether its logic has any theorem whatsoever, and the problem of determining whether the Nmatrix is \emph{monadic}\footnote{Monadicity is an expressiveness requirement that plays an essential role in the synthesis of analytic calculi for the logic (see~\cite{SYNTH}).}. Expectedly, these undecidability results were obtained using reductions from other known undecidable problems, namely the universality problem for term-dag automata~\cite{AutomataOnDAGRepresentationsOfFiniteTrees}, and the halting problem for Minsky's counter machines~\cite{minsky}. 

Our main result in the present paper is precisely the undecidability of the equivalence problem in the presence of non-determinism, and its proof relies on a reduction from the above mentioned theorem existence problem. The reduction uses an interesting and surprisingly simple trick allowed by the non-deterministic environment, which bears similarities with results about infectious semantics such as~\cite{ISMVL}.
For the sake of self-containment, but also due to the key role it plays and the deep illustration it provides about the power of non-determinism, we shall also carefully revisit the undecidability result for the theorem existence problem and its reduction from the halting problem for counter machines.

Our result helps explain some of the difficulties posed by studying logics characterized by finite Nmatrices. In concrete cases, showing whether two such logics coincide, or not, may be quite challenging. Still, we shall overview a few useful techniques and sufficient criteria that can be used in practice to deal with such problems.  \\

The paper is organized as follows. In Section~\ref{sec2} we recall the essential notions of Tarskian logic, logical matrices and Nmatrices, introduce the above mentioned computational problems, and recall the decidability of the equivalence problem for finite logical matrices. In Section~\ref{sec3}, we revisit Minsky's counter machines and the undecidability of the theorem existence problem for Nmatrices, obtained via a computable reduction from the halting problem for counter machines. Section~\ref{sec4} establishes our main results, a useful construction on Nmatrices permitted by non-determinism that allows isolating theoremhood from a given Nmatrix, and ultimately the undecidability of the equivalence problem for Nmatrices via a dual reduction from the theorem existence problem. Finally, Section~\ref{sec5} is devoted to overviewing a number of sufficient conditions for equivalence, and an analysis of some concrete examples. We conclude, in Section~\ref{sec6:conc}, with a discussion of the results obtained and the challenges they pose to a systematic algebraic-like study of non-deterministic logical matrices.

\section{Basic notions and results}\label{sec2}

In this section we introduce the relevant notions regarding (propositional-based) logics, as well as logical matrices and Nmatrices, the computational problems we will look into, and their decidability with respect to finite matrices.

\subsection{Logics}

A \emph{signature} $\Sigma$ is a family of sets of \emph{connectives} indexed by their arity, $\Sigma = \{\Sigma^{(k)} : k \in \nats_0\}$.
We will say that $\Sigma$ is a \emph{finite signature} if it comprises only finitely many connectives, that is, $\Sigma^{(k)}$ is finite for all $k \in \nats_0$, and $\{k\in\nats_0:\Sigma^{(k)}\neq\emptyset\}$ is also finite. 
Let $P$ be a set of \emph{propositional variables}, we denote by $L_{\Sigma}(P)$ the \emph{set of formulas} based on $P$ built from the connectives in $\Sigma$. Unless specified otherwise, we assume $P = \{p_i : i \in \nats\}$
and, for every $n \in \mathbb{N}$, denote by $P_n$ the set $P = \{p_i :  i \leq n\}$.
In general, we use roman letters ($A,B, \dots$) to represent formulas and capital greek letters ($\Gamma, \Delta, \dots$) to represent sets of formulas.
For every $A \in L_{\Sigma}(P)$, we denote by $\Sub(A)$ and $\Var(A)$, respectively, the \emph{set of subformulas} and the \emph{set of variables} of $A$. As usual, we say that $A$ is a \emph{closed formula} if $\Var(A)=\emptyset$.
A \emph{substitution} is a function $\sigma : P \to L_{\Sigma}(P)$, uniquely extendable to an endomorphism ${\sigma} : L_{\Sigma}(P) \to L_{\Sigma}(P)$ (for simplicity, we use the same denomination for both).
For every $\Gamma\cup\{A\} \subseteq L_{\Sigma}(P)$ and every substitution $\sigma$, we use $A^\sigma$ to denote the formula $\sigma(A)$, and $\Gamma^{\sigma}$ to denote the set $\{A^{\sigma} : A \in \Gamma\}$.\smallskip

A \emph{(Tarskian) consequence relation}, also called a \emph{logic}, is a pair $\tuple{\Sigma,\sder}$ where $\Sigma$ is a signature and $\sder {\subseteq}\, {\mathbf{2}^{L_{\Sigma}(P)} \times L_{\Sigma}(P)}$ is a relation such that the following properties hold,
for all $\Gamma \cup \Delta \cup \{A\} \subseteq L_{\Sigma}(P)$:
\begin{itemize}
    \item if $A \in \Gamma$ then $\Gamma \sder A$ \emph{(reflexivity)},
    \item if $\Gamma \sder A$ and $\Gamma \subseteq \Delta$ then $\Delta \sder A$ \emph{(monotonicity)},
    \item if $\Gamma \sder A$ and $\Delta \sder B$, for all $B \in \Gamma$, then $\Delta \sder A$ \emph{(transitivity)},
    \item if $\Gamma \sder A$ then $\Gamma^{\sigma} \sder A^{\sigma}$, for every substitution $\sigma$ \emph{(substitution-invariance)}.
\end{itemize}

Additionally, a consequence relation $\tuple{\Sigma,\sder}$ is said to be \emph{compact} if the following property also holds:
\begin{itemize}
    \item if $\Gamma \sder A$ then there is finite $\Delta\subseteq\Gamma$ such that $\Delta \sder A$ \emph{(finitariness)}.
\end{itemize}

A formula $A\in L_{\Sigma}(P)$ is said to be a \emph{theorem} of $\tuple{\Sigma,\sder}$ if $\emptyset \sder A$, and we denote by $\Thm(\Sigma,\sder)$ the set of all its theorems.\\

A consequence relation $\tuple{\Sigma,\sder}$ is said to be \emph{$n$-determined} for $n \in \mathbb{N}$, if for all $\Gamma \cup \{A\} \subseteq L_{\Sigma}(P)$, whenever
$\Gamma \not\sder A$ then there exists a substitution $\sigma : P \to  P_n$
%{\color{gray}L_{\Sigma}(P_n)}
 such that $\Gamma^{\sigma} \not\sder A^{\sigma}$.
 We say that $(\Sigma,\sder)$ is \emph{finitely-determined} when it is $n$-determined for some $n \in \mathbb{N}$. 
 
Further, $\tuple{\Sigma,\sder}$ is said to be \emph{locally tabular} if, for every $n\in\nats$, $L_\Sigma(P_n)$ is partitioned into finitely many equivalence classes by the Frege interderivability relation, that is, there exists a finite set $\Delta_n\subseteq L_\Sigma(P)$ such that for each $A\in L_\Sigma(P_n)$ there exists $A^*\in\Delta_n$ such that $A\dashv\sder A^*$, i.e., $\{A\}\sder A^*$ and $\{A^*\}\sder A$.

\subsection{Matrices and Nmatrices}

A \emph{non-deterministic matrix (Nmatrix)} over signature $\Sigma$ is a triple $\Mt = \tuple{V,D,\cdot_{\Mt}}$, where
$V$ is a non-empty set of \emph{truth-values}, $D\subseteq V$ is a set of \emph{designated} truth-values and, for every $k \in \nats_0$ and
$\conn \in \Sigma^{(k)}$, $\conn_{\Mt} : V^k \to \mathbf{2}^V\setminus\{\emptyset\}$ is the \emph{interpretation} of $\conn$ in $\Mt$.
A truth-value is said to be \emph{undesignated} if it belongs to $V \setminus D$. We refer to the pair $\tuple{V,\cdot_{\Mt}}$ as the
\emph{underlying multialgebra} of $\Mt$, and usually dub $\Mt$ a $\Sigma$-Nmatrix. We will say that $\Mt$ is \emph{finite-valued} provided that $V$ is a finite set, and further say that $\Mt$ is \emph{finite} if $\Sigma$ is also a finite signature.

$\Mt$ is said to be 
\emph{deterministic, or simply a $\Sigma$-matrix} if, for every $k \in \nats_0$, $\conn \in \Sigma^{(k)}$ and $x_1,\dots,x_k \in V$, we have that $\conn_{\Mt}(x_1,\dots,x_k)$ is a singleton. In that case, the underlying multialgebra $\tuple{V,\cdot_{\Mt}}$ can indeed be understood as an algebra with each $\conn_\Mt$ as a function of type $V^k\to V$.

A \emph{valuation} of $\Mt$ is a function $v : L_{\Sigma}(P) \to V$ such that, for every $k \in \nats_0$, $\conn \in \Sigma^{(k)}$ and $A_1,\dots,A_k \in L_\Sigma(P)$ it is the case that 
\begin{equation}\label{valcond}
    v(\conn(A_1,\dots,A_k)) \in \conn_{\Mt}(v(A_1),\dots,v(A_k)).
\end{equation}
The \emph{set of all valuations of $\Mt$} is denoted by $\Val(\Mt)$.
As is well known, if $\Mt$ is a matrix then every function $v : Q \to V$, where $Q\subseteq P$, can be extended to a valuation of $\Mt$, and the extension is unique for formulas in $L_\Sigma(Q)$.
When building a valuation for some Nmatrix $\Mt$, however, one needs to choose values for complex formulas as well.
Still, in general, a useful form of locality for valuations in Nmatrices still holds. Namely,  
any function $v : \Gamma \to V$, where $\Sub(\Gamma)\subseteq\Gamma \subseteq L_{\Sigma}(P)$, can be extended to a valuation of $\Mt$, provided that
condition~(\ref{valcond}) holds for every formula in $\Gamma\setminus P$, in which case $v$ is dubbed a \emph{prevaluation}. \\

Every $\Sigma$-Nmatrix $\Mt = \tuple{V,D,\cdot_{\Mt}}$ defines a logic $\tuple{\Sigma,\sder_{\Mt}}$ in the following standard way: for every $\Gamma \cup \{A\} \subseteq L_{\Sigma}(P)$,
let $\Gamma \sder_{\Mt} A$ if and only if, for every $v \in \Val(\Mt)$, $v(A) \in D$ whenever $v(\Gamma) \subseteq D$. When $\Mt$ is finite-valued it is well known that $(\Sigma,\sder_{\Mt})$ is always compact~\cite[Theorem 3.15]{Avron:Lev:NDMVS}.

In what follows, we will say that $\Sigma$-Nmatrices $\Mt_1,\Mt_2$ are \emph{equivalent} provided that $\sder_{\Mt_1}{=}\sder_{\Mt_2}$.\\

Fixed a $\Sigma$-Nmatrix $\Mt = \tuple{V,D,\cdot_{\Mt}}$, a formula $A\in L_\Sigma(P_n)$ induces a multi-function $[A]_{\Mt} : V^n \to \mathbf{2}^V\setminus\{\emptyset\}$, where for $\vec{x}=(x_1,\dots,x_n)\in V^n$ one has:
\begin{equation*}
    [A]_{\Mt}(\vec{x}) = \{v(A) : v \in \Val(\Mt) \text{ such that } v(p_i) = x_i \text{, for all } i=1,\dots,n\}.
\end{equation*}
The multi-function $[A]_{\Mt}$ is said to be \emph{expressed} by the formula $A$ in $\Mt$, and thus \emph{expressible} in $\Mt$.
Note that, whenever $\Mt$ is deterministic, the set $[A]_{\Mt}(\vec{x})$ is always a singleton and, therefore, we can understand $[A]_\Mt$ as a function of type $V^n\to V$.
Indeed, if we do that, given $\conn(A_1,\dots,A_k)\in L_\Sigma(P_n)$, we have that $v(\conn(A_1,\dots,A_k))=\conn_\Mt(v(A_1),\dots,v(A_k))$ for any $v\in\Val(\Mt)$, and therefore also

\begin{equation}\label{valmatr}
    [\conn(A_1,\dots,A_k)]_\Mt(\vec{x})=\conn_\Mt([A_1]_\Mt(\vec{x}),\dots,[A_k]_\Mt(\vec{x})).
\end{equation}

These facts explain the following property, that we recall from~\cite{finval}.

\begin{proposition}\label{finval}
Let $n\in\nats$. If $\Mt = \tuple{V,D,\cdot_{\Mt}}$ is a finite-valued $\Sigma$-matrix then $\tuple{\Sigma,\sder_{\Mt}}$ is locally tabular and $n$-determined if $n\geq |V|$.
\end{proposition}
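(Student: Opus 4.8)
The plan is to exploit the defining feature of a deterministic matrix: by~(\ref{valmatr}) each formula $A\in L_\Sigma(P_n)$ induces a genuine \emph{function} $[A]_\Mt:V^n\to V$, and as observed above the set $[A]_\Mt(\vec{x})$ is then a singleton, so that for every valuation $v\in\Val(\Mt)$ one has $v(A)=[A]_\Mt(v(p_1),\dots,v(p_n))$; in particular the value of $A$ depends only on the values $v$ assigns to the variables of $A$. Both assertions follow from this observation together with the finiteness of $V$.

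For local tabularity, fix $n\in\nats$ and first note that $[A]_\Mt=[B]_\Mt$ implies $A\dashv\sder_\Mt B$: for any $v\in\Val(\Mt)$ we then have $v(A)=[A]_\Mt(v(p_1),\dots,v(p_n))=[B]_\Mt(v(p_1),\dots,v(p_n))=v(B)$, whence $v(A)\in D$ iff $v(B)\in D$, giving both $\{A\}\sder_\Mt B$ and $\{B\}\sder_\Mt A$. Hence the assignment $A\mapsto[A]_\Mt$ factors through Frege interderivability, and the number of $\dashv\sder_\Mt$-classes in $L_\Sigma(P_n)$ is bounded by the number of functions $V^n\to V$, namely $|V|^{|V|^{n}}$, which is finite. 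Picking one representative formula from each class yields the required finite set $\Delta_n$, and since $n$ was arbitrary the logic is locally tabular.

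For $n$-determinacy with $n\geq|V|$, suppose $\Gamma\not\sder_\Mt A$, witnessed by some $v\in\Val(\Mt)$ with $v(\Gamma)\subseteq D$ and $v(A)\notin D$. Enumerate $V=\{a_1,\dots,a_m\}$ with $m=|V|\leq n$, define the substitution $\sigma:P\to P_n$ by $\sigma(p)=p_j$ whenever $v(p)=a_j$, and let $v'\in\Val(\Mt)$ be any valuation with $v'(p_j)=a_j$ for $j=1,\dots,m$ (such a valuation exists, and is unique on $L_\Sigma(P_n)$, because $\Mt$ is deterministic). A straightforward induction on formula structure shows $v'(C^\sigma)=v(C)$ for every $C\in L_\Sigma(P)$: the base case holds since $v'(p^\sigma)=v'(p_j)=a_j=v(p)$, and in the inductive step, using that $\sigma$ is a homomorphism and that condition~(\ref{valcond}) holds with equality for a deterministic matrix, $v'((\conn(C_1,\dots,C_k))^\sigma)=\conn_\Mt(v'(C_1^\sigma),\dots,v'(C_k^\sigma))=\conn_\Mt(v(C_1),\dots,v(C_k))=v(\conn(C_1,\dots,C_k))$. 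In particular $v'(A^\sigma)=v(A)\notin D$ while $v'(B^\sigma)=v(B)\in D$ for every $B\in\Gamma$, so $v'$ witnesses $\Gamma^\sigma\not\sder_\Mt A^\sigma$, as required.

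I do not anticipate a serious obstacle: the first part is a counting argument and the second a variable-renaming argument, and the sole load-bearing ingredient in both is determinism, which makes $[A]_\Mt$ a function and pins down a valuation by its values on variables. The only care required is in the bookkeeping of the substitution $\sigma$ and the auxiliary valuation $v'$; notably, the uniqueness of the extension used for $v'$ and the equality in~(\ref{valcond}) both fail for genuine Nmatrices, which is exactly why this argument does not transfer to the non-deterministic setting.
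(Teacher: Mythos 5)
Your proof is correct and follows essentially the same route as the paper's: the same counting of expressible functions $V^n\to V$ with interderivability of formulas expressing the same function for local tabularity, and the same substitution $\sigma(p)=p_j$ iff $v(p)=a_j$ with an auxiliary valuation $v'$ and induction $v'(C^\sigma)=v(C)$ for $n$-determinedness. The only difference is that you spell out the induction step and the role of determinism explicitly, which the paper leaves as ``a straightforward induction.''
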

\proof{We start with local tabularity. Since there are finitely many functions of type $V^n\to V$ (exactly $|V|^{|V|^n}$) it follows that the set $\{[A]_{\Mt}:A\in L_\Sigma(P_n)\}$ is finite. Further, 
assuming that for formulas $A,B\in L_\Sigma(P_n)$ one has $[A]_{\Mt}=[B]_{\Mt}$ then it follows that $A\dashv\sder_\Mt B$. Namely, given $v\in\Val(\Mt)$, if $v(p_1)=x_1,\dots,v(p_n)=x_n$ one has $v(A)=[A]_\Mt(x_1,\dots,x_n)=[B]_\Mt(x_1,\dots,x_n)=v(B)$ and thus $v(A)\in D$ if and only if $v(B)\in D$. Local tabularity follows by letting $\Delta_n\subseteq L_\Sigma(P_n)$ contain exactly one representative formula for each of the finitely many $n$-place functions expressible in $\Mt$.\smallskip

Concerning $n$-determinedness, now, let $V=\{x_1,\dots,x_m\}$ with $n\geq m$, and assume that $\Gamma\not\sder_{\Mt}A$ for some $\Gamma\cup\{A\}\subseteq L_\Sigma$. By definition, we know that there exists $v\in\Val(\Mt)$ such that $v(\Gamma)\subseteq D$ and $v(A)\notin D$. Consider the substitution $\sigma:P\to P_n$ defined, for each $p\in P$, by $\sigma(p)=p_i$ if and only if $v(p)=x_i$. We claim that $\Gamma^\sigma\not\sder_{\Mt}A^\sigma$. To see this, just let $v'\in\Val(M)$ be any valuation such that $v'(p_i)=x_i$ for $i=1,\dots,m$. A straightforward induction on $B\in L_\Sigma(P)$ shows that $v'(B^\sigma)=v(B)$. 
Thus, we conclude that $v'(\Gamma^\sigma)=v(\Gamma)\subseteq D$, $v'(A^\sigma)=v(A)\notin D$, and it follows that $\Gamma^\sigma\not\sder_{\Mt}A^\sigma$.\qed}\\

A compositionality property such as (\ref{valmatr}) is not necessarily true for Nmatrices. One still has that $[\conn(A_1,\dots,A_k)]_\Mt(\vec{x})\subseteq\bigcup_{y_1\in [A_1]_\Mt(\vec{x}),\dots,y_k\in[A_k]_\Mt(\vec{x})}\conn_\Mt(y_1,\dots,y_k)$ but equality does not hold in general (just suppose that $A_i=A_j=A$ with $i\neq j$ and that $[A]_\Mt(\vec{x})$ has more than one element, and note that necessarily $v(A_i)=v(A_j)$ for every $v\in\Val(\Mt)$). 
Further, a finite-valued Nmatrix may well fail to be finitely-determined, or locally tabular, as illustrated in the examples below.

\begin{example}{(Unconstrained Nmatrices)}\label{unconstrained}\\
Given signature $\Sigma$, the \emph{unconstrained} $\Sigma$-Nmatrix is $\Ut_\Sigma=\tuple{\{0,1\},\{1\},\cdot_{\Ut_\Sigma}}$ where $\conn_{\Ut_\Sigma}(\vec{x})=\{0,1\}$ for each $k\in\nats_0$, $\conn\in\Sigma^{(k)}$ and $\vec{x}=(x_1,\dots,x_n)\in \{0,1\}^n$.

It is clear that $\Val(\Ut_\Sigma)=\{0,1\}^{L_\Sigma(P)}$ and thus that the Nmatrix defines the \emph{discrete} logic such that $\Gamma\sder_{\Ut_\Sigma}A$ if and only if $A\in\Gamma$, for $\{A\}\cup\Gamma\subseteq L_\Sigma(P)$.\smallskip

As long as $\Sigma$ is non-empty, such a discrete logic cannot be defined by a finite-valued $\Sigma$-matrix, as shown in~\cite{Avron:Lev:NDMVS}. Further, when $\Sigma$ is non-empty
it is also worth noting that $\tuple{\Sigma,\sder_{\Ut_\Sigma}}$ fails to be locally tabular. Indeed, for every $n\in\nats$ and $A\in L_\Sigma(P_n)\setminus P_n$ it is immediate that $[A]_{\Ut_\Sigma}(\vec{x})=\{0,1\}$ for all $\vec{x}$. This actually means that any two formulas with at least one connective each express the same multi-function. However, as we have already seen, if $B\in L_\Sigma(P_n)\setminus P_n$ is such that $B\neq A$ then $A\not\sder_{\Ut_\Sigma} B$ (and $B\not\sder_{\Ut_\Sigma} A$).
\hfill$\triangle$
\end{example}

\begin{example}{(A single 1-place connective)}\label{unarys}\\
Let $\Sigma$ be a signature with a single 1-place connective, i.e., $\Sigma^{(1)}=\{\flat\}$ and $\Sigma^{(k)}=\emptyset$ if $k\neq 1$. 
Consider the two-valued $\Sigma$-Nmatrix $\Ut_\Sigma$, and its possible refinements $\Mt_i=(\{0,1\},\{1\},\cdot_{\Mt_i})$ with $i=1,\dots,8$ given by:

\begin{center}
\begin{tabular}{C{5pt}|C{5mm}|C{5mm}|C{5mm}|C{5mm}|C{5mm}|C{5mm}|C{5mm}|C{5mm}|C{5mm}}
 & ${\flat_{\Ut_\Sigma}}$ & ${\flat_{\Mt_1}}$  & ${\flat_{\Mt_2}}$  & ${\flat_{\Mt_3}}$  & ${\flat_{\Mt_4}}$  & ${\flat_{\Mt_5}}$  & ${\flat_{\Mt_6}}$  & ${\flat_{\Mt_7}}$  & ${\flat_{\Mt_8}}$ \\
\hline
$0$& $ 0,1 $ & $ 0,1 $ & $ 0,1 $ & $ 0 $ & $ 0 $ & $ 0 $ & $ 1 $& $ 1 $& $ 1 $\\
$1$ &$ 0,1 $ &$ 0 $ &$ 1 $ &$ 0,1 $ &$ 0 $ &$ 1 $ &$ 0,1 $ &$ 0 $ &$ 1 $
\end{tabular}
\end{center}

These nine distinct Nmatrices define eight distinct logics. For instance, one has $\sder_{\Mt_1}{\neq}\sder_{\Ut_\Sigma}$ as can be easily noted from the fact that $\{A,\flat A\}\sder_{\Mt_1} B$ for all $A,B\in L_\Sigma(P)$. A thorough analysis of why most of these logics are distinct, and in particular to identifying which pair of these Nmatrices are equivalent, are postponed to the examples in Section~\ref{sec5}.
\hfill$\triangle$
\end{example}

{%\color{purple}
\begin{example}{(Kearns modal semantics without possible worlds)}\label{something}\\
Let $\Sigma$ be a signature with two 1-place connectives $\neg$ and $\Box$, and two 2-place connectives $\lor$ and $\to$, i.e., $\Sigma^{(1)}=\{\neg,\Box\}$,
 $\Sigma^{(2)}=\{\lor,\to\}$ 
and $\Sigma^{(k)}=\emptyset$ if $k\neq 2$, and consider the $4$-valued $\Sigma$-Nmatrix $\KMt=\tuple{\{F,f,t,T\},\{t,T\},\cdot_{\KMt}}$ introduced by Kearns~\cite{kearns1981modal}:

 \begin{center} \begin{tabular}{c | c c c c}
${\lor}_\KMt$ & $F$ & $f$ & $t$ & $T$  \\ %[2mm]
\hline
$F$&  $F$ & $f$&   $t$  & $T$ \\
$f$ &$f$ & $f$ &  $t,T$ & $T$ \\
$t$&  $t$ & $t,T$ & $t,T$ & $T$ \\
$T$ &$T$ & $T $&  $T$ &$T$ 
\end{tabular}
\quad
\begin{tabular}{c | c c c c}
${\to}_\KMt$ & $F$ & $f$ & $t$ & $T$  \\%[2mm]
\hline
$F$&  $T$ & $T$&   $T$  & $T$ \\
$f$ &$t$ & $t,T$ &  $t,T$ & $T$ \\
$t$&  $f$ & $f$ & $t,T$ & $T$ \\
$T$ &$F$ & $f $&  $t$ &$T$ 
\end{tabular}
\quad
\begin{tabular}{c | c | c}
 & ${\neg}_\KMt$ & ${\Box}_\KMt$ \\ %[2mm]
\hline
$F$& $ T $ & $F,f$\\
$f$ &$ t $ & $F,f$\\
$t$& $ f $ & $F,f$\\
$T$ &$ F $ & $ t,T$
\end{tabular}
\end{center}
Note that $[p\to q]_\KMt=[\neg p \lor q]_\KMt$. % 
First, we illustrate the fact that the multi-function induced by a formula cannot be reduced to the composition of the multi-functions of immediate subformulas. Let $A=p\lor \neg p$ and $B=\neg p\lor  p$.
The truth-table of $\lor_\KMt$ is symmetric and thus
 $[A]_\KMt=[B]_\KMt$.  However, $[\Box A\to \Box A]_\KMt\neq
 [\Box A\to \Box B]_\KMt$.
Looking at the diagonal of $\to_\KMt$ we conclude that $[\Box A\to \Box A]_\KMt(x)\subseteq\{t,T\}$ for every $x\in \{F,f,t,T\}$.
However, for instance
picking $v\in\Val(\KMt)$ extending
\begin{center}
\begin{tabular}{c|c|c|c|c|c|c|c|c} %
 & $p$ & $\neg p$  & $A$  & $B$  & $\square A$  & $\square B$  & $\square A\to \square A$  & $\square A\to \square B$ \\
\hline
$v$& $ t $ & $ f $ & $ T $ & $ t $ & $ T $ & $ F $ & $ T $& $ F $\\
 \end{tabular}
\end{center}
which is a prevaluation since
$f\in\neg_\KMt(t)$, 
$t,T\in(t \lor_\KMt f)=(f \lor_\KMt t)$, 
$T\in\neg_\KMt(T)$, 
$F\in\neg_\KMt(t)$,
$T \in (T \to_\KMt T)$, and
$T\in (T \to_\KMt F)$.
Hence, $F\in [\Box A\to \Box B]_\KMt(t)$,
and thus $[\Box A\to \Box A]_\KMt\neq [\Box A\to \Box B]_\KMt$.
\smallskip

We can also check that the logic defined by $\KMt$ fails to be finitely-determined. For $n\in\nats$, 
 let  
 $A_n=p_1\to (p_2 \to\dots (p_{n-1}\to p_n)\dots)$, and
 $\mathsf{Subst}_n$  
 collect every
  substitution $\tau:P_{n}\to P_{n+1}$.
Consider $\Gamma_n=\{A_n^\tau:\tau\in \mathsf{Subst}_n\}\setminus \{A_n\}$.
We have that
$\square\Gamma_n\not\sder_\KMt \square A_n$, as witnessed by the prevaluation defined by $v_n(B)=t$ for 
$B\in (\sub(\Gamma_n)\setminus \Gamma_n)\cup (\sub(A_n)\setminus \{A_n\})$,
$v_n(B)=v_n(\square B)=T$  for $B\in \Gamma_n$,
$v(A_n)=t$
and 
$v(\square A_n)=F$. 
 Clearly, $v_n(\square\Gamma_n)=\{T\}\subseteq\{t,T\}$ and $v_n(\square A_n)=F\notin\{t,T\}$.

On the other hand, for every
$\sigma:P_{n+1}\to P_n$ we have that
$\square\Gamma_n^{\sigma}\der_\KMt \square A_n^{\sigma}$, simply because
 $A_n^{\sigma}\in \Gamma_n^{\sigma}$. Indeed, given $\sigma:P_{n+1}\to P_n$, it suffices to show that 
        there is $\tau\in  \mathsf{Subst}_n$ such that 
        $\tau(p_i)\neq p_i$ for some $1\leq i\leq n$, and
        $(A_n)^{\sigma\circ \tau}=A_n^\sigma$,
 that is, that $\sigma\circ \tau=\sigma$.
% %  
By the pigeonhole principle we know that for any
 $\sigma:P_{n+1}\to P_n$ we must have that $\sigma(p_i)=\sigma(p_j)$ for some $1\leq i<j \leq n+1$.

Consider
$$\tau(p_\ell)=
\begin{cases}
 p_\ell &\mbox{ if }\ell\neq i\\
 p_j &\mbox{ if } \ell=i
\end{cases}$$
Note that $\tau(p_i)\neq p_i$.
Further,
$\tau(p_\ell)= p_\ell$ for $\ell\neq i$.
Thus, $\sigma(p_\ell)=\sigma(\tau(p_\ell))$ for $\ell\neq i$.
Finally, by assumption we have that $\tau(p_i)=p_j$, and since $\sigma(p_i)=\sigma(p_j)$ we conclude also that
$\sigma(p_i)=\sigma(\tau(p_i))$.
\hfill$\triangle$

\end{example}
}

\subsection{Computational problems}\label{subcompprob}

Given a class $\mathcal{M}$ of finite Nmatrices, we are interested in the following computational problems.

\begin{itemize}
\item ${\mathsf{Csq}}(\mathcal{M})$: given a finite signature $\Sigma$, a finite $\Sigma$-Nmatrix $\Mt\in\mathcal{M}$ and a finite set $\Gamma\cup\{A\}\subseteq L_\Sigma(P)$ determine whether $\Gamma\sder_\Mt A$; 
\item ${\mathsf{\exists Thm}}(\mathcal{M})$: given a finite signature $\Sigma$ and a finite $\Sigma$-Nmatrix $\Mt\in\mathcal{M}$ determine whether $\Thm(\Sigma,\sder_\Mt)\neq\emptyset$; 
\item ${\mathsf{Eqv}}(\mathcal{M})$: given a finite signature $\Sigma$ and finite $\Sigma$-Nmatrices $\Mt_1,\Mt_2\in\mathcal{M}$ determine whether $\sder_{\Mt_1}{=}\sder_{\Mt_2}$. 
\end{itemize}

We dub ${\mathsf{Csq}}(\mathcal{M})$ the \emph{consequence problem for class $\mathcal{M}$},  ${\mathsf{\exists Thm}}(\mathcal{M})$ the \emph{theorem existence problem for class $\mathcal{M}$}, and ${\mathsf{Eqv}}(\mathcal{M})$ the \emph{equivalence problem for class $\mathcal{M}$}. Despite the generality, we will typically consider the cases when ${\mathcal{M}}=\mathsf{NMatr}$ is the class of all finite Nmatrices, or when ${\mathcal{M}}=\mathsf{Matr}$ is the class of all finite matrices. As usual, the dual of a computational problem is the computational problem obtained by negating the envisaged condition. Given a computational problem $\mathsf{P}$ we will denote its \emph{dual} by $\overline{P}$. For instance, $\overline{{\mathsf{Eqv}}(\mathcal{M})}$ is the problem of determining whether $\sder_{\Mt_1}{\neq}\sder_{\Mt_2}$, given a finite signature $\Sigma$ and finite $\Sigma$-Nmatrices $\Mt_1,\Mt_2\in\mathcal{M}$.

A problem is \emph{decidable} if there is an algorithm that on each suitable input answers 
$\mathsf{yes/no}$ depending on whether the envisaged condition holds. 
Similarly, a problem is \emph{semidecidable} (also known as \emph{recursively enumerable}) if there is an algorithm that answers $\mathsf{yes}$ precisely on inputs  
for which the condition holds. 

Of course, if any of the above problems is (semi)decidable with respect to a class $\mathcal{M}$ then it is also (semi)decidable with respect to any class $\mathcal{M}'\subseteq\mathcal{M}$. Recall also from any basic textbook on computability theory (e.g,~\cite{sipser}) that  a problem $\mathsf{P}$ is decidable if and only if $\overline{\mathsf{P}}$ is decidable if and only if both  $\mathsf{P}$ and  $\overline{\mathsf{P}}$ are semidecidable, as well as the notion of \emph{computable reduction} (or many-one reduction) between problems, denoted by $\leq$, and the fact that if $\mathsf{P}\leq \mathsf{Q}$ and problem $\mathsf{Q}$ is (semi)decidable then also $\mathsf{P}$ is (semi)decidable.\smallskip

The problem  ${\mathsf{Csq}}(\mathsf{Matr})$ is known to be decidable (and in $\mathbf{coNP}$), namely via the method of truth-tables. A result of~\cite{Baaz2013} shows that the problem  ${\mathsf{Csq}}(\mathsf{NMatr})$ is also decidable (and still in $\mathbf{coNP}$). Taking these facts into account, as well as the compactness, finite-determinedness and local tabularity of the logic of any given finite-valued matrix, one gets close to understanding why the problem ${\mathsf{Eqv}}(\mathsf{Matr})$ is decidable. Consider the following very simple useful property of Tarskian logics.

\begin{proposition}\label{inclusion}
Let $\Sigma$ be a signature, $\tuple{\Sigma,\vdash_1}$ a compact logic, and $\tuple{\Sigma,\vdash_2}$ a $n$-determined logic for $n\in\nats$. Assuming 
$\Theta\subseteq L_\Sigma(P_n)$ is such that for each $A\in L_\Sigma(P_n)$ there exists $A^*\in\Theta$ with $A\dashv\sder_1 A^*$ and $A\dashv\sder_2 A^*$, then
 $$\vdash_1{\subseteq}\vdash_2$$
 $$\textrm{if and only if}$$ 
 $$\textrm{$\Gamma\vdash_1 A$ implies $\Gamma\vdash_2 A$ for all finite $\Gamma\cup\{A\}\subseteq \Theta$.}$$
\end{proposition}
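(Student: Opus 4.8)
The forward implication is immediate: a finite $\Gamma\cup\{A\}\subseteq\Theta$ is in particular a subset of $L_\Sigma(P)$, so the inclusion $\vdash_1\subseteq\vdash_2$ restricts at once to exactly the stated condition. All the work lies in the converse, and the plan is to take an arbitrary derivability $\Gamma\vdash_1 A$ and successively shrink it — first to finitely many premises, then to $n$ variables, then to representatives in $\Theta$ — until the assumption becomes applicable, and finally to transport the resulting $\vdash_2$-derivability back up to the original formulas.

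So assume $\Gamma\vdash_1 A$; I want to conclude $\Gamma\vdash_2 A$. First I would use compactness of $\vdash_1$ to pick a finite $\Gamma_0\subseteq\Gamma$ with $\Gamma_0\vdash_1 A$; by monotonicity it then suffices to prove $\Gamma_0\vdash_2 A$, which I would establish by contradiction, assuming $\Gamma_0\not\vdash_2 A$. Applying $n$-determinedness of $\vdash_2$ to this non-derivability yields a substitution $\sigma:P\to P_n$ with $\Gamma_0^\sigma\not\vdash_2 A^\sigma$, where now $\Gamma_0^\sigma\cup\{A^\sigma\}$ is a finite subset of $L_\Sigma(P_n)$. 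In parallel, substitution-invariance of $\vdash_1$ turns $\Gamma_0\vdash_1 A$ into $\Gamma_0^\sigma\vdash_1 A^\sigma$.

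Now every formula $B\in\Gamma_0^\sigma\cup\{A^\sigma\}$ lies in $L_\Sigma(P_n)$ and so has a representative $B^*\in\Theta$ with $B\dashv\vdash_1 B^*$ and $B\dashv\vdash_2 B^*$. Writing $\Theta_0=\{B^*:B\in\Gamma_0^\sigma\}$, a finite subset of $\Theta$, the key step is to move $\Gamma_0^\sigma\vdash_1 A^\sigma$ onto representatives: using $B^*\vdash_1 B$ together with monotonicity and transitivity to rewrite all the premises, and then $A^\sigma\vdash_1 (A^\sigma)^*$ with transitivity to rewrite the conclusion, I obtain $\Theta_0\vdash_1 (A^\sigma)^*$. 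Since $\Theta_0\cup\{(A^\sigma)^*\}$ is a finite subset of $\Theta$, the assumption gives $\Theta_0\vdash_2 (A^\sigma)^*$. Performing the symmetric transport back, now along the $\dashv\vdash_2$-interderivabilities (using $B\vdash_2 B^*$ for the premises and $(A^\sigma)^*\vdash_2 A^\sigma$ for the conclusion, again with monotonicity and transitivity), produces $\Gamma_0^\sigma\vdash_2 A^\sigma$, contradicting the choice of $\sigma$. Hence $\Gamma_0\vdash_2 A$, and monotonicity gives $\Gamma\vdash_2 A$.

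I expect the only delicate point to be this double transport along representatives, where the transitivity (cut) rule must be used in exactly the form stated — once to replace the whole set of premises simultaneously, and once more for the single conclusion — and where one must keep $\Theta_0$ finite so that the assumption, which quantifies only over finite subsets of $\Theta$, genuinely applies. It is precisely to secure this finiteness that compactness of $\vdash_1$ is needed; the reduction works as a whole because compactness on the $\vdash_1$ side and $n$-determinedness on the $\vdash_2$ side together cut the problem down to the finitely-generated, $n$-variable fragment captured by $\Theta$.
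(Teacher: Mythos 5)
Your proof is correct and follows essentially the same route as the paper's: compactness to shrink to a finite $\Gamma_0$, substitution-invariance to move into $L_\Sigma(P_n)$, the double transport along $\dashv\vdash_1$ and $\dashv\vdash_2$ (via monotonicity and transitivity) to pass through finite subsets of $\Theta$, and $n$-determinedness to return to the original formulas. The only cosmetic difference is that you argue by contradiction from a single substitution supplied by $n$-determinedness, whereas the paper argues directly by establishing $\Delta_0^\sigma\vdash_2 B^\sigma$ for \emph{every} $\sigma:P\to P_n$ and then invoking $n$-determinedness in contrapositive form.
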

\proof{If $ {\,\vdash_1} \subseteq {\vdash_2} $ then $\Gamma\vdash_1 A$ implies $\Gamma\vdash_2 A$ for all $\Gamma\cup\{A\}\subseteq L_\Sigma(P)$. We prove the converse implication, assuming that $\Gamma\vdash_1 A$ implies $\Gamma\vdash_2 A$ for all finite $\Gamma\cup\{A\}\subseteq \Theta$, and also that for some $\Delta\cup\{B\}\subseteq L_\Sigma$ we have $\Delta\vdash_1 B$.
From the compactness of $\tuple{\Sigma,\vdash_1}$, there exists finite $\Delta_0\subseteq \Delta$ such that $\Delta_0\vdash_1 B$. Using substitution-invariance, then, for every substitution $\sigma:P\to P_n$ it follows that $\Delta_0^\sigma\vdash_1 B^\sigma$. Hence, given that $\Delta_0^\sigma\cup\{B^\sigma\}\subseteq L_\Sigma(P_n)$ is finite, it follows that $(\Delta_0^\sigma)^*\cup\{(B^\sigma)^*\}\subseteq \Theta$ is also finite, and using the assumptions as well as monotonicity and transitivity, we have that $(\Delta_0^\sigma)^*\sder_1(B^\sigma)^*$, and so $(\Delta_0^\sigma)^*\sder_2(B^\sigma)^*$, and finally also $\Delta_0^\sigma\sder_2 B^\sigma$ for every $\sigma:P\to L_\Sigma(P_n)$. Therefore, since $\tuple{\Sigma,\vdash_2}$ is $n$-determined, it follows that $\Delta_0\sder_2 B$, and using monotonicity, it is also the case that $\Delta\sder_2 B$.\qed}\\

Take two $\Sigma$-matrices $\Mt_1$ and $\Mt_2$. Clearly, $\sder_{\Mt_1}{=}\sder_{\Mt_2}$ if and only if $\sder_{\Mt_1}{\subseteq}\sder_{\Mt_2}$ and $\sder_{\Mt_2}{\subseteq}\sder_{\Mt_1}$.
At the light of Propositions~\ref{finval} and \ref{inclusion}, one could verify the inclusion of each of the logics in the other by testing only formulas in the set $\Theta\subseteq L_\Sigma(P_n)$ where $n$ is an upper bound on the numbers of truth-values in each of the matrices. To make this procedure effective, we just need to show that there exists a finite set $\Theta$ with the desired property, which can be easily achieved using jointly the local tabularity of the logics of both Nmatrices.

\begin{proposition}\label{fintheta}
Let $\Sigma$ be a signature, $\Mt_1=\tuple{V_1,D_1,\cdot_{\Mt_1}}$ and $\Mt_2=\tuple{V_2,D_2,\cdot_{\Mt_2}}$ finite $\Sigma$-matrices. For every $n\in\nats$ with $|V_1|,|V_2|\leq n$ it is possible to compute a finite set $\Theta\subseteq L_\Sigma(P_n)$ such that for each $A\in L_\Sigma(P_n)$ there exists $A^*\in\Theta$ with $A\dashv\sder_1 A^*$ and $A\dashv\sder_2 A^*$.
\end{proposition}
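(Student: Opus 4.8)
The plan is to exploit the compositional semantics of deterministic matrices to compute, in a single finite saturation, one representative per realizable pair of induced functions. Recall from the discussion preceding Proposition~\ref{finval} that, since $\Mt_1$ and $\Mt_2$ are deterministic, each $A\in L_\Sigma(P_n)$ induces genuine functions $[A]_{\Mt_1}:V_1^n\to V_1$ and $[A]_{\Mt_2}:V_2^n\to V_2$, computed compositionally by~(\ref{valmatr}): for $\conn\in\Sigma^{(k)}$ one has $[\conn(A_1,\dots,A_k)]_{\Mt_j}=\conn_{\Mt_j}([A_1]_{\Mt_j},\dots,[A_k]_{\Mt_j})$, applied pointwise. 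I associate to each formula its \emph{type} $\tau(A)=([A]_{\Mt_1},[A]_{\Mt_2})$, a pair living in the finite set $T$ of all $(f_1,f_2)$ with $f_j:V_j^n\to V_j$ (so $|T|=|V_1|^{|V_1|^n}\cdot|V_2|^{|V_2|^n}$). The point is that, by compositionality, $\tau(\conn(A_1,\dots,A_k))$ depends only on $\tau(A_1),\dots,\tau(A_k)$, so every connective induces a well-defined operation on types.

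First I would build the set $R\subseteq T$ of realizable types, together with a chosen witness formula for each, by saturation. Initialize $R$ with the types $\tau(p_1),\dots,\tau(p_n)$ of the variables, whose induced functions are projections and hence directly computable, recording $p_i$ as the witness of $\tau(p_i)$. Then repeatedly, for every $\conn\in\Sigma^{(k)}$ and every choice of $k$ already-recorded witnesses $A_1,\dots,A_k$ (the case $k=0$ supplying the constants), compute the type of $\conn(A_1,\dots,A_k)$ by the pointwise rule above; if it is not yet in $R$, add it, with witness $\conn(A_1,\dots,A_k)$. Since $R\subseteq T$ and $T$ is finite, no type can be added more than once, so the procedure halts at a fixpoint; let $\Theta$ be the resulting finite set of witnesses. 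Every step is effective, as the induced functions are finite tables and applying a connective to types is a finite pointwise table lookup.

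Next I would verify that $\Theta$ has the desired property. A straightforward structural induction on $A\in L_\Sigma(P_n)$ shows that $\tau(A)\in R$: the variables are handled by the initialization, and for $\conn(A_1,\dots,A_k)$ the induction hypothesis places each $\tau(A_i)$ in $R$, whence by compositionality and the fixpoint property of the saturation $\tau(\conn(A_1,\dots,A_k))$ lies in $R$ as well. Thus, taking $A^*\in\Theta$ to be the witness recorded for $\tau(A)$, we obtain $[A]_{\Mt_1}=[A^*]_{\Mt_1}$ and $[A]_{\Mt_2}=[A^*]_{\Mt_2}$. Finally, exactly as in the proof of Proposition~\ref{finval}, equality of induced functions forces interderivability: for $j=1,2$, from $[A]_{\Mt_j}=[A^*]_{\Mt_j}$ it follows that $v(A)=v(A^*)$ for every $v\in\Val(\Mt_j)$, and hence $v(A)\in D_j$ iff $v(A^*)\in D_j$, giving $A\dashv\sder_{\Mt_j}A^*$. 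So $A\dashv\sder_{\Mt_1}A^*$ and $A\dashv\sder_{\Mt_2}A^*$, as required.

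The only genuine subtlety is termination and effectivity of the construction, and this rests entirely on the compositionality identity~(\ref{valmatr}): it is what makes the set of realizable types closed under the connectives, and so computable by a finite saturation rather than by an unbounded search through $L_\Sigma(P_n)$. I would stress that this is precisely the property that fails for genuine Nmatrices, so the argument does not transfer to the non-deterministic setting, foreshadowing the undecidability results established later.
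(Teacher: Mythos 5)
Your proposal is correct and follows essentially the same route as the paper: both build $\Theta$ by a finite saturation that records one witness formula per realizable pair $([A]_{\Mt_1},[A]_{\Mt_2})$, starting from the variables and closing under the connectives, and both conclude interderivability from equality of the induced functions as in Proposition~\ref{finval}. Your write-up is merely more explicit than the paper's about the role of the compositionality identity~(\ref{valmatr}) in guaranteeing closure of the realizable types and about the structural induction establishing completeness of the saturation, points the paper leaves implicit.
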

\proof{According to Proposition~\ref{finval},  both $\tuple{\Sigma,\sder_{\Mt_1}}$ and $\tuple{\Sigma,\sder_{\Mt_2}}$ are $n$-determined, and thus there are finite sets $\Delta_{1,n},\Delta_{2,n}\subseteq L_\Sigma(P_n)$ such that for every $A\in L_\Sigma(P_n)$ there exist $A^{*_1}\in\Delta_{1,n}$ and $A^{*_2}\in\Delta_{2,n}$ with $A\dashv\sder_{\Mt_1}A^{*_1}$ and $A\dashv\sder_{\Mt_2}A^{*_2}$. However, it may well happen that $A^{*_1}\neq A^{*_2}$ and  $A\not{\!\!\!\dashv\sder_{\Mt_2}}A^{*_1}$, or $A\not{\!\!\!\dashv\sder_{\Mt_1}}A^{*_2}$, or both.

 To overcome this difficulty, we note that $\{([A]_{\Mt_1},[A]_{\Mt_2}):A\in L_\Sigma(P_n)\}$ is also a finite set, and that if $[A]_{\Mt_1}=[B]_{\Mt_1}$ and $[A]_{\Mt_2}=[B]_{\Mt_2}$ then it is immediate that $A\dashv\sder_{\Mt_1}B$ and $A\dashv\sder_{\Mt_2}B$. Thus, we need to compute $\Theta$ in such a way that it contains one representative formula for each of the finitely many pairs of $n$-place functions expressible in $\Mt_1$ and $\Mt_2$. For the purpose, let 
 $\Delta^{\mathcal{F}}=\{([A]_{\Mt_1},[A]_{\Mt_2}):A\in \Delta\}$ for any given set $\Delta\subseteq L_\Sigma(P_n)$, and define
 ${\Theta}_0=P_n$,  
${\Theta}_{i+1}={\Theta}_i\cup\{\conn(A_1,\dots,A_k): k\in\nats_0, \conn\in\Sigma^{(k)}, A_1,\dots,A_k\in\Theta_i,([\conn(A_1,\dots,A_k)]_{\Mt_1},[\conn(A_1,\dots,A_k)]_{\Mt_2})\notin\Theta_i^{\mathcal{F}}\}$. Just let $\Theta$ be the fixedpoint that the construction necessarily reaches in finite time.\qed}

\begin{corollary}
The problem ${\mathsf{Eqv}}(\mathsf{Matr})$ is decidable.
\end{corollary}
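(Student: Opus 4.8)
The plan is to assemble the corollary from the machinery already developed, since all the hard work has been front-loaded into Propositions~\ref{finval}, \ref{inclusion}, and~\ref{fintheta}. Given finite $\Sigma$-matrices $\Mt_1=\tuple{V_1,D_1,\cdot_{\Mt_1}}$ and $\Mt_2=\tuple{V_2,D_2,\cdot_{\Mt_2}}$, I would first fix $n=\max(|V_1|,|V_2|)$, so that by Proposition~\ref{finval} both logics $\tuple{\Sigma,\sder_{\Mt_1}}$ and $\tuple{\Sigma,\sder_{\Mt_2}}$ are $n$-determined; moreover, each is compact (as recalled in the text, the logic of a finite-valued matrix is always compact). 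I would then invoke Proposition~\ref{fintheta} to \emph{compute} the finite set $\Theta\subseteq L_\Sigma(P_n)$ of joint representatives, i.e.\ such that every $A\in L_\Sigma(P_n)$ has some $A^*\in\Theta$ with $A\dashv\sder_{\Mt_1}A^*$ and $A\dashv\sder_{\Mt_2}A^*$ simultaneously.

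The decision procedure itself is then the following. I would reduce the equivalence test $\sder_{\Mt_1}{=}\sder_{\Mt_2}$ to the two inclusions $\sder_{\Mt_1}{\subseteq}\sder_{\Mt_2}$ and $\sder_{\Mt_2}{\subseteq}\sder_{\Mt_1}$, and handle each inclusion by Proposition~\ref{inclusion}. Taking $\vdash_1=\sder_{\Mt_1}$ (compact) and $\vdash_2=\sder_{\Mt_2}$ ($n$-determined), with the computed $\Theta$ supplying the required joint representatives, Proposition~\ref{inclusion} tells us that $\sder_{\Mt_1}{\subseteq}\sder_{\Mt_2}$ holds \emph{if and only if} $\Gamma\sder_{\Mt_1}A$ implies $\Gamma\sder_{\Mt_2}A$ for every finite $\Gamma\cup\{A\}\subseteq\Theta$. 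Since $\Theta$ is finite, there are only finitely many such pairs $\tuple{\Gamma,A}$ to examine, and each individual check of whether $\Gamma\sder_{\Mt_i}A$ is decidable because ${\mathsf{Csq}}(\mathsf{Matr})$ is decidable (via truth-tables, as recalled before Proposition~\ref{inclusion}). Running this finite battery of consequence tests decides the first inclusion; swapping the roles of $\Mt_1$ and $\Mt_2$ (now using the compactness of $\sder_{\Mt_2}$ and the $n$-determinedness of $\sder_{\Mt_1}$) decides the second. The algorithm answers $\mathsf{yes}$ exactly when both inclusions are confirmed.

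There is no genuine obstacle here; the corollary is a bookkeeping assembly of the three preceding propositions, and the only points needing care are (i) confirming that $n=\max(|V_1|,|V_2|)$ legitimately witnesses $n$-determinedness for \emph{both} matrices via Proposition~\ref{finval}, and (ii) emphasizing that the set $\Theta$ and hence the list of pairs to test is not merely finite but \emph{effectively computable}, which is precisely what Proposition~\ref{fintheta} guarantees. The genuine mathematical content — local tabularity yielding finitely many interderivability classes, and the reduction of a universally-quantified inclusion to finitely many consequence checks — has already been proved; the corollary just records that these ingredients combine into a terminating algorithm with finitely many decidable subroutine calls.
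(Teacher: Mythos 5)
Your proposal is correct and follows essentially the same route as the paper: the text surrounding Propositions~\ref{inclusion} and~\ref{fintheta} lays out exactly this argument, reducing equivalence to the two inclusions, each tested on the finitely many pairs from the computable joint-representative set $\Theta$, with the individual checks discharged by the decidability of ${\mathsf{Csq}}(\mathsf{Matr})$. Your two points of care (choosing $n=\max(|V_1|,|V_2|)$ to get $n$-determinedness of both logics, and stressing that $\Theta$ is effectively computable rather than merely finite) are precisely the ingredients the paper relies on.
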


We shall not go into  the details here, but is clear that a similar (even simpler) line of thought allows us to conclude that ${\mathsf{\exists Thm}}(\mathsf{Matr})$ is also decidable. Due to substitution-invariance, it is clear that if a logic has a theorem then it has a theorem with (at most) a single variable. Thus, one can iteratively build a finite set of formulas representing each of the finitely many $1$-place functions expressible in the matrix, as above, while checking at each step whether any of the obtained formulas is a theorem.\smallskip

It is worth noting at this point that the ideas underlying these decidability results simply do not extend to proper Nmatrices, as noted also in~\cite{CZE}. Namely, as illustrated in Examples~\ref{unconstrained} and~\ref{something}, despite the fact that the logics of finite Nmatrices are still known to be compact (see~\cite{Avron:Lev:NDMVS}), in general they may fail to be finitely-determined and locally tabular, and in particular formulas expressing the same multi-function are typically not interderivable.

\section{Undecidability of $\mathsf{\exists Thm}(\mathsf{NMatr})$}\label{sec3}

We recall the undecidability of the theorem existence problem for Nmatrices. The proof consists of a computable reduction from a well known undecidable problem related to counter machines. We refer the reader to~\cite{FilipeNCL}, where the construction was first presented, for any additional details.\\

Recall from~\cite{minsky} that a \emph{counter-machine} is a tuple $\mathcal{C} = \tuple{n, Q, \qi, \delta}$, where $n \in \mathbb{N}$ is the number of \emph{counters},
    $Q$ is a finite set of \emph{states}, $\qi \in Q$ is the \emph{initial state}, and $\delta : Q \to (\{\incr{i}: 1 \leq i \leq n\}\times Q)
    \cup (\{\test{i}: 1 \leq i \leq n\}\times Q^2)$ is a partial \emph{transition} function. 
    The set  of \emph{halting states} of $\mathcal{C}$ is $\mathsf{H}_\mathcal{C} = \{q \in Q : \delta(q) \text{ undefined}\}$.
The set of \emph{configurations} of $\mathcal{C}$ is defined as $\mathsf{Config}_{\mathcal{C}} = Q \times \nats_0^n$, whereas configurations in $\{\qi\}\times \nats_0^n$ are dubbed \emph{initial}, and configurations in $\mathsf{HConfig}_{\mathcal{C}}=\mathsf{H}_\mathcal{C}\times \nats_0^n$ are dubbed \emph{halting}. 

Counter-machines compute deterministically, from a given initial configuration until, possibly, a halting configuration is reached, according to the progress function $\nxt_\mC : (\mathsf{Config}_{\mC}\setminus\mathsf{HConfig}_{\mathcal{C}}) \to \mathsf{Config}_{\mC}$ defined below, where $\vec{e_i}=(e_{i,1},\dots,e_{i,n}) \in \mathbb{N}_0^n$ is such that $e_{i,i} = 1$, and $e_{i,j} = 0$ for  $j \neq i$, $q,q',q''\in Q$ and $\vec{a}=(a_1,\dots,a_n)\in\nats_0^n$.
\begin{equation*}
    \nxt{}_\mC(q,\vec{a}) =
    \begin{cases}
        \tuple{q',\vec{a} + \vec{e_i}} &\text{if } \delta(q) = (\incr{i}, q'), \\
        \tuple{q',\vec{a} - \vec{e_i}} &\text{if } \delta(q) = (\test{i}, q',q'') \text{ and } a_i \neq 0, \\
        \tuple{q'',\vec{a}} &\text{if } \delta(q) = (\test{i}, q',q'') \text{ and } a_i = 0, \\
    \end{cases}
\end{equation*}

Computations of $\mathcal{C}$ are thus non-empty finite, or infinite, sequences of configurations $\tuple{C_0,C_1,\dots, C_k,\dots}$ such that $C_0$ is initial, and for each $k\in\nats_0$ either $C_k$ is halting and is precisely the last configuration of the (finite) sequence, or else
 $C_{k+1} = \nxt_\mC(C_{k})$ is the next configuration of the sequence. Henceforth, if $\vec{a}\in\nats_0^n$ determines the initial values of the counters, we denote by $\mathsf{comp}(\mC,\vec{a})$ the (finite or infinite) computation of $\mC$ starting at  $C_0=\tuple{\qi,\vec{a}}$. It is well known from~\cite{minsky} that assuming all counters are initially set to zero does not hinder the Turing-completeness of the model, and so that the following problem is undecidable. 
\begin{itemize}
\item ${\mathsf{HALT}}$: given a counter machine $\mC$, determine whether $\mC$ halts when all counters are initially set to zero, i.e., whether $\mathsf{comp}(\mC,\vec{0})$ is finite.
\end{itemize}

The following construction, and results, were originally presented in~\cite{FilipeNCL}, where the reader may find further details.

%%%%%%%%%%%
%%%%%%%%%%%

\begin{definition}\label{def-mach2matr}
    Let $\mC = \tuple{n,Q,\qi,\delta}$ be a counter-machine.
    
    The \emph{signature induced by $\mC$} is $\Sigma_{\mC}$, defined by 
    $\Sigma_{\mC}^{(0)} = \{\zero,\epsilon\}$, $\Sigma_{\mC}^{(1)} = \{\suc\}$, $\Sigma_{\mC}^{(n+1)} = \{\step^q : q \in Q\}$, and $\Sigma^{(j)}_{\mC} = \emptyset$ for $j \notin \{0,1,n+1\}$.\smallskip

   The \emph{$\Sigma_{\mC}$-Nmatrix induced by $\mC$} is $\Ct = \tuple{V_\mC,D_\mC,\cdot_{\Ct}}$ defined by
\begin{itemize}
\item $V_\mC=\Rm \cup \Conf \cup  \{\mathrm{init},\mathrm{error}\}$,
\item $D_\mC=\HConf$, and
\item for all $q,q',s \in Q$, $x\in V_\mC$ and $\vec{y},\vec{z}\in V_\mC^n$,\\[-11mm]

\begin{minipage}{4cm}
\mbox{}\\
\noindent\begin{equation*}
        \zero_{\Ct}  = \{\n_{= 0},\n_{\geq 0}\} 
    \end{equation*}
    \vspace*{1mm}
    \begin{equation*}
        \epsilon_{\Ct} = \{\mathrm{init}\}
    \end{equation*}
\end{minipage}
\begin{minipage}{5cm}
\mbox{}\\[4mm]
    \begin{equation*}
        \suc_{\Ct}(x) =
        \begin{cases}
            \{\n_{\geq 1}\} &\quad\text{if } x = \n_{= 0}, \\
            \{\n_{\geq 0},\n_{\geq 1}\} &\quad\text{if } x = \n_{\geq 0}, \\
            \{\n_{\geq 2}\} &\quad\text{if } x \in\{ \n_{\geq 1},\n_{\geq 2}  \},\\
            \{\mathrm{error}\} &\quad\text{otherwise,}
        \end{cases}
    \end{equation*}
\end{minipage}

    \begin{equation*}
        \step^q_{\Ct}(x,\vec{z}) =
        \begin{cases}
            \{\mathrm{conf}_{q,\vec{z}}\}
            &\text{if } x=\mathrm{init},q=q_{\mathsf{init}} \\
            &\hspace{40pt} \text{ and } \vec{z}\in \{\n_{= 0}\}^n\cup\{\n_{\geq 0}\}^n, \text{ or} \\
            &\text{if } x=\mathrm{conf}_{q',\vec{y}},\,  \vec{z} \in \Rm^n,  \text{ and }\\
            &\qquad\, \delta(q')=(\inc(i),q),z_i\in \suc_{\Ct}(y_i) \\
            &\hspace{40pt}\text{ and } z_\ell=y_\ell  \text{ for }\ell\neq i,\text{ or} \\
            &\qquad\, \delta(q')=(\test{i},q,s),y_i\in \suc_{\Ct}(z_i) \\
            &\hspace{40pt}\text{ and } z_\ell=y_\ell  \text{ for }\ell\neq i,\text{ or} \\
            &\qquad\, \delta(q')=(\test{i},s,q),y_i\in\zero_{\Ct} \\
            &\hspace{40pt}\text{ and }\vec{z}=\vec{y},\\
            \{\mathrm{error}\} &\text{otherwise,}
            \end{cases}
    \end{equation*}

\end{itemize}
where $\Rm=\{\n_{= 0},\n_{\geq 0},\n_{\geq 1},\n_{\geq 2}\}$, $\Conf=\{\mathrm{conf}_{q,\vec{\n}} :q \in Q,\vec{\n}\in \Rm^n\}$ and $\HConf=\{\mathrm{conf}_{q,\vec{\n}}\in\Conf:q\in\mathsf{H}_\mathcal{C}\}$.\hfill$\triangle$
\end{definition}

Note that $\Sigma_{\mathcal{C}}$ is a finite signature and $\Ct$ is finite-valued, given any counter machine $\mathcal{C}$.
$\Sigma_\mC$ is conceived for encoding finite sequences of configurations of $\mC$ as closed formulas. 
Natural numbers are encoded via $\enc : \mathbb{N}_0 \to L_{\Sigma_{\mC}}(\emptyset)$ defined by
    $\enc(0) = \zero$, and $\enc(a) = \suc(\enc(a-1))$ for $a > 0$.  
    Given $\vec{a}=(a_1,\dots,a_n)\in\nats_0^n$, we will write $\enc(\vec{a})$ to denote $(\enc(a_1),\dots,\enc(a_n))$.    
    Finite sequences of configurations are encoded via $\seq : \mathsf{Config_{\mC}^*} \to L_{\Sigma_{\mC}}(\emptyset)$ defined
     by $\seq(\tuple{}) = \epsilon$, and $\seq(\tuple{C_0, \dots, C_k, \tuple{q,\vec{a}}}) = \step^q(\seq(\tuple{C_0, \dots, C_k}),\enc(\vec{a}))$.
     
$\Ct$ is conceived for closely mimicking the computation of $\mC$ when all counters are initially set to zero, despite the `abstract' interpretation given to natural numbers. Technically, the value $\mathrm{error}\notin D_\mC$ is absorbing, that is, if $A\in L_{\Sigma_\mC}(P)$, $B\in\Sub(A)$ and $v\in\Val(\Ct)$ is such that $v(B)=\mathrm{error}$ then also $v(A)=\mathrm{error}$. This fact immediately implies that any theorem of $\tuple{\Sigma_\mC,\vdash_\Ct}$ must necessarily be a closed formula. The definition of $\Ct$ further guarantees that if $\Var(A)=\emptyset$, if $v(A)\in \Conf$ then it must be the case that 
$A=\seq(\vec{C})$ for some finite non-empty sequence $\vec{C}=\tuple{C_0, \dots, C_k}$ of configurations. Easily, also, if $v(A)\in \HConf$ then it must be the case that $C_k$ is a halting configuration.

Given that the non-determinism in $\Ct$ is confined to $\zero_{\Ct}$ and $\suc_{\Ct}$, $v(A)$ is completely determined by $v(\enc(\nats_0))$ for every $v\in\Val(\Ct)$ and every closed $A\in L_{\Sigma_\mC}(\emptyset)$.
The following lemma allows us to navigate the different possibilities, and their consequences.

\begin{lemma}\label{lemma-mach2matr}
Let $\mC$ be a counter machine and $v\in\Val(\Ct)$. The restriction of $v$ to $\enc(\nats_0)$ is such that $v_{|\enc(\nats_0)}\in\{v_=,v_\omega\}\cup\{v_k:k\in\nats_0\}$ where
\begin{itemize}
\item $v_=(\enc(0))=\n_{= 0}$, $v_=(\enc(1))=\n_{\geq 1}$, and $v_=(\enc(a))=\n_{\geq 2}$ for all $a>1$,
\item $v_\omega(\enc(a))=\n_{\geq 0}$ for all $a\in\nats_0$, and
\item $v_k(\enc(a))=\n_{\geq 0}$ for all $a\leq k$, $v_k(\enc(k+1))=\n_{\geq 1}$, and $v_k(\enc(a))=\n_{\geq 2}$ for all $a>k+1$.
\end{itemize}
Further, for $a,b\in\nats_0$ the following properties hold.
\begin{enumerate}[(i)]
\item If $a\neq b+1$ then there is $v\in\Val(\Ct)$ such that $v(\enc(a))\notin\suc_\Ct(v(\enc(b))).$
\item If $a\neq b$ then there is $v\in\Val(\Ct)$ such that $v(\enc(a))\neq v(\enc(b))$.
\end{enumerate}
\end{lemma}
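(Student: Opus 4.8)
The plan is to first pin down exactly which functions on $\enc(\nats_0)$ can arise as restrictions of valuations, exploiting that the only non-determinism relevant to closed numeral formulas lives in $\zero_\Ct$ and $\suc_\Ct$ and that these keep us inside $\Rm$. Concretely, I would first note that $v(\enc(a))\in\Rm$ for every $a$, by induction: $v(\enc(0))=v(\zero)\in\{\n_{=0},\n_{\geq 0}\}\subseteq\Rm$, and $\suc_\Ct$ sends every element of $\Rm$ back into $\Rm$ (the $\error$ clause is triggered only on values outside $\Rm$). Hence only the first three clauses of $\suc_\Ct$ are ever used along $\enc(\nats_0)$.

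With this in hand, the characterization follows from a case split on $v(\zero)$. If $v(\zero)=\n_{=0}$, then $v(\enc(1))\in\suc_\Ct(\n_{=0})=\{\n_{\geq 1}\}$ and thereafter $v(\enc(a))\in\suc_\Ct(\n_{\geq 1})=\suc_\Ct(\n_{\geq 2})=\{\n_{\geq 2}\}$, forcing $v_{|\enc(\nats_0)}=v_=$. If instead $v(\zero)=\n_{\geq 0}$, let $k$ be least with $v(\enc(k+1))\neq\n_{\geq 0}$, if such $k$ exists. When it does not, we get $v_\omega$; when it does, minimality gives $v(\enc(j))=\n_{\geq 0}$ for $j\leq k$, the inclusion $v(\enc(k+1))\in\suc_\Ct(\n_{\geq 0})=\{\n_{\geq 0},\n_{\geq 1}\}$ forces $v(\enc(k+1))=\n_{\geq 1}$, and then $v(\enc(a))=\n_{\geq 2}$ for $a>k+1$ exactly as before, i.e.\ $v_{|\enc(\nats_0)}=v_k$. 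Conversely, each of $v_=,v_\omega,v_k$ is checked directly to satisfy condition~(\ref{valcond}) at every $\enc(a)$, so each is a prevaluation on the subformula-closed set $\enc(\nats_0)$ and therefore extends to a genuine valuation of $\Ct$; this realizability is what lets me invoke specific valuations in the two separation claims.

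For claim~(i) I would use these realizable valuations and split on the position of $a$ relative to $b$ (note $a\neq b+1$ means $a\leq b$ or $a\geq b+2$). If $a\geq b+2$, the valuation $v_b$ gives $v_b(\enc(b))=\n_{\geq 0}$ and $v_b(\enc(a))=\n_{\geq 2}\notin\{\n_{\geq 0},\n_{\geq 1}\}=\suc_\Ct(\n_{\geq 0})$. If $a=0\leq b$, then $v_=(\enc(0))=\n_{=0}$, which lies in no set of the form $\suc_\Ct(x)$. If $1\leq a\leq b$, the valuation $v_{a-1}$ gives $v_{a-1}(\enc(a))=\n_{\geq 1}$ while $v_{a-1}(\enc(b))\in\{\n_{\geq 1},\n_{\geq 2}\}$ (since $b\geq a=(a-1)+1$), so $\suc_\Ct(v_{a-1}(\enc(b)))=\{\n_{\geq 2}\}\not\ni\n_{\geq 1}$. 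Claim~(ii) is cleaner: assuming without loss of generality $a<b$, the valuation $v_a$ assigns $v_a(\enc(a))=\n_{\geq 0}$ and $v_a(\enc(b))\in\{\n_{\geq 1},\n_{\geq 2}\}$, which differ.

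I expect the main obstacle to be the characterization itself rather than the two numbered claims: one must argue completeness of the case analysis and, in the $v(\zero)=\n_{\geq 0}$ branch, correctly isolate the single index $k$ at which the value first leaves $\n_{\geq 0}$ and verify that it is then forced up the chain $\n_{\geq 1}\mapsto\n_{\geq 2}\mapsto\n_{\geq 2}\mapsto\cdots$. Once the list of possible restrictions and their realizability are established, the separation claims~(i) and~(ii) reduce to bookkeeping over the explicit multi-function $\suc_\Ct$.
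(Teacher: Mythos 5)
Your proof is correct and follows essentially the same route as the paper: a case split on $v(\zero)$ that tracks the forced successor values to obtain the three families $v_=$, $v_\omega$, $v_k$, followed by exhibiting these (realized as prevaluations on the subformula-closed set $\enc(\nats_0)$, then extended to valuations) as explicit witnesses for claims (i) and (ii). The only differences are cosmetic: you group the cases of (i) slightly differently (e.g.\ using $v_{a-1}$ for all of $1\leq a\leq b$ where the paper uses $v_a$ when $a<b$), and you prove (ii) directly with $v_a$ rather than deriving it from (i) as the paper does — both are fine.
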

\proof{If $v\in\Val(\Ct)$ is such that $v(\enc(0))=v(\zero)=\n_{=0}$ then $v(\enc(1))=\n_{\geq 1}$, and $v(\enc(a))=\n_{\geq 2}$ for all $a>1$, which makes $v_{|\enc(\nats_0)}=v_=$. Otherwise, 
$v(\enc(0))=\n_{\geq 0}$ and $\suc_\Ct(\n_{\geq 0})=\{\n_{\geq 0},\n_{\geq 1}\}$. Thus, either $v(\enc(a))=\n_{\geq 0}$ for all $a\in\nats_0$, in which case $v_{|\enc(\nats_0)}=v_\omega$, or else 
for some $k\in\nats_0$ we have $v(\enc(a))=\n_{\geq 0}$ for all $n\leq k$, and then $v(\enc(k+1))=\n_{\geq 1}$ and $v(\enc(a))=\n_{\geq 2}$ for all $a>k+1$, in which case $v_{|\enc(\nats_0)}=v_k$.

\begin{enumerate}[{\it (i)}]
\item If $a<b$ take $v_a$, if $a=b=0$ take $v_=$, if $a=b\neq 0$ take $v_{a-1}$, and if $a>b+1$ take $v_b$. In any case, extend the obtained prevaluation to $v\in\Val(\Ct)$.
\item Assuming, without loss of generality, that $a<b=(b-1)+1$, we have from case {\it (i)} that there exists $v$ such that $v(\enc(a))\notin\suc_\Ct(v(\enc(b-1)))$. Thus, $v(\enc(a))\neq v(\enc(b))\in\suc_\Ct(v(\enc(b-1)))$.
\qed
\end{enumerate}
}

We now show that the logic of $\Ct$ has at most one theorem, which is precisely the encoding of the computation of $\mC$ when all counters are initially set to zero, in case it is halting. Said another way, $\tuple{\Sigma_\mC,\Ct}$ has no theorems if $\mathsf{comp}(\mC,\vec{0})$ is infinite, and otherwise has exactly one theorem which is precisely $\seq(\mathsf{comp}(\mC,\vec{0}))$.

\begin{proposition}\label{mach2nmatr}
Let $\mathcal{C}$ be a counter machine. For every $A\in L_{\Sigma_{\mathcal{C}}}(P)$, we have 
$$\textrm{$\mathsf{comp}(\mC,\vec{0})$ is finite and $A=\seq(\mathsf{comp}(\mC,\vec{0}))$ \underline{if and only if} $A\in\Thm(\Sigma_{\mathcal{C}},\sder_\Ct)$. }$$
\end{proposition}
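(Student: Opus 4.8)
The plan is to prove the two implications separately, relying on three facts already established about $\Ct$: that $\mathrm{error}$ is absorbing and undesignated (so every theorem is a closed formula); that a closed formula $A$ with $v(A)\in\Conf$ must have the form $A=\seq(\vec{C})$ for a finite non-empty sequence of configurations $\vec{C}=\tuple{C_0,\dots,C_m}$, with $C_m$ halting whenever $v(A)\in\HConf$; and the classification of valuations provided by Lemma~\ref{lemma-mach2matr}, together with its separation clauses (i) and (ii).

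For the forward implication, suppose $\mathsf{comp}(\mC,\vec 0)=\tuple{C_0,\dots,C_m}$ is finite and $A=\seq(\mathsf{comp}(\mC,\vec 0))$. I must show $v(A)\in\HConf$ for every $v\in\Val(\Ct)$. By Lemma~\ref{lemma-mach2matr} the restriction $v_{|\enc(\nats_0)}$ is one of $v_=$, $v_\omega$, $v_k$, and a direct check shows that each of these respects the successor structure positively, in the sense that $v(\enc(a+1))\in\suc_\Ct(v(\enc(a)))$ for all $a$, while $v(\enc(0))\in\zero_\Ct$ lies in $\{\n_{=0}\}\cup\{\n_{\geq 0}\}$. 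An induction along the sequence then shows that every application of a $\step^q$ matches one of its non-$\mathrm{error}$ clauses: the base clause fires because $C_0=\tuple{\qi,\vec 0}$, and each successive clause fires because $C_{j+1}=\nxt_\mC(C_j)$ supplies exactly the successor/zero relation required (the off-coordinate equalities hold trivially, since a single step alters only the relevant counter). Hence $v(A)=\mathrm{conf}_{q_m,\vec\n}$ with $q_m\in\mathsf{H}_\mC$, so $v(A)\in\HConf$ and $A\in\Thm(\Sigma_\mC,\sder_\Ct)$.

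For the converse, assume $A\in\Thm(\Sigma_\mC,\sder_\Ct)$. Since $\mathrm{error}$ is absorbing and undesignated, $A$ is closed, and since $v(A)\in\HConf\subseteq\Conf$ for every $v$, the decomposition fact gives $A=\seq(\tuple{C_0,\dots,C_m})$ with $C_m$ halting. I then argue by induction on $j$ that $C_0=\tuple{\qi,\vec 0}$, that no $C_j$ with $j<m$ is halting (else $\delta$ is undefined there and \emph{every} clause of $\step$ yields $\mathrm{error}$), and that $C_{j+1}=\nxt_\mC(C_j)$. The base case uses $v_=$: since under $v_=$ no encoding takes the value $\n_{\geq 0}$, the base clause forces $\vec z\in\{\n_{=0}\}^n$, hence $\vec a_0=\vec 0$ and the head $\step^{\qi}$. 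For the inductive step I exploit that $v(A)\neq\mathrm{error}$ for \emph{every} $v$: fixing the deterministic instruction $\delta(q_j)$, the only clause whose target state and counter constraints can hold simultaneously under all valuations is the one realizing $\nxt_\mC(C_j)$. Concretely, an increment demands $v(\enc(b_i))\in\suc_\Ct(v(\enc(a_i)))$ and $v(\enc(b_\ell))=v(\enc(a_\ell))$ for all $v$, which by clauses (i) and (ii) of Lemma~\ref{lemma-mach2matr} force $b_i=a_i+1$ and $b_\ell=a_\ell$; the two test clauses are separated by evaluating at $v_=$, where $\n_{=0}\notin\suc_\Ct(x)$ for any $x$ excludes the decrement clause when $a_i=0$, while $v_=(\enc(a_i))\notin\zero_\Ct$ for $a_i\neq 0$ excludes the zero clause otherwise. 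As $C_m$ is halting, the computation stops exactly there, so $\mathsf{comp}(\mC,\vec 0)$ is finite and equals $\tuple{C_0,\dots,C_m}$, giving $A=\seq(\mathsf{comp}(\mC,\vec 0))$.

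The main obstacle is the inductive step of the converse: $v_=$ alone does not determine the encoded counters, because $\n_{\geq 2}$ is a fixed point of $\suc_\Ct$ and so cannot distinguish, for instance, a transition from $2$ to $2$ from one from $2$ to $3$. The crux is therefore to use that theoremhood quantifies over \emph{all} valuations, and to feed this universal quantification into the separation clauses (i) and (ii) of Lemma~\ref{lemma-mach2matr}, which furnish, for any pair of distinct numbers, a valuation witnessing the discrepancy and thereby producing $\mathrm{error}$ unless the step is exactly $\nxt_\mC$.
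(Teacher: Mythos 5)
Your overall strategy coincides with the paper's: the forward direction by induction along the computation, and the converse by noting that every theorem must be a closed formula of the form $\seq(\tuple{C_0,\dots,C_m})$ with $C_m$ halting, then proving by induction that this sequence is a prefix of $\mathsf{comp}(\mC,\vec{0})$, using $v_=$ to pin down the initial configuration and the target of test instructions, and Lemma~\ref{lemma-mach2matr}{\it (i)}--{\it (ii)} to force the counter values. The forward direction and the increment case of the converse are fine as stated, since an increment instruction matches only one non-$\mathrm{error}$ clause of $\step^q_{\Ct}$, so non-$\mathrm{error}$ under every valuation really does give that clause's constraints under every valuation.

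The gap is in the test case, precisely when the two branches of $\delta(q)=(\test{i},q'',q''')$ satisfy $q''=q'''$. Then both the decrement clause and the zero clause of $\step^{q'}_{\Ct}$ have the matching target state, so for each valuation $v$, non-$\mathrm{error}$ only tells you that the constraints of \emph{one of the two} clauses hold under $v$ --- and which one may vary with $v$. Your separation argument excludes the wrong clause only under $v_=$ itself; it does not yield the statement you need in order to invoke Lemma~\ref{lemma-mach2matr}, namely that the constraints of the \emph{correct} clause hold under \emph{all} valuations. Likewise, your closing remark that the lemma ``furnishes a valuation witnessing the discrepancy and thereby producing $\mathrm{error}$'' silently assumes that the witnessing valuation also falsifies the \emph{other} clause, which you never verify. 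The paper closes exactly this hole with a cross-clause argument: if, say, $a_i\neq 0$ and the zero clause fires under a valuation $v$ extending $v_\omega$ or some $v_c$, then necessarily $v(\enc(a_i))=v(\enc(b_i))=\n_{\geq 0}$, and since $\n_{\geq 0}\in\suc_\Ct(\n_{\geq 0})$ the decrement clause's relation $v(\enc(a_i))\in\suc_\Ct(v(\enc(b_i)))$ holds under that $v$ as well (dually for the $a_i=0$ case, using that $\n_{\geq 0}\in\zero_\Ct$). Only after this step do the required relations hold universally, so that Lemma~\ref{lemma-mach2matr}{\it (i)}--{\it (ii)} give $\vec{b}=\vec{a}-\vec{e_i}$ (resp.\ $\vec{b}=\vec{a}$). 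Your proof needs this cross-clause argument, or an explicit check that each witnessing valuation defeats both clauses simultaneously, to be complete.
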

\proof{Easily, by induction, one shows that for every finite non-empty prefix $\vec{C}=\tuple{C_0,\dots,C_k}$ of $\mathsf{comp}(\mC,\vec{0})$, if $C_k=\tuple{q,\vec{x}}$ then we have $v(\seq(\vec{C}))=\mathrm{conf}_{q,v(\enc(\vec{x}))}$ for every $v\in\Val(\Ct)$.\smallskip

Assume that $\mathsf{comp}(\mC,\vec{0})$ is finite and let $v\in\Val(\Ct)$. Since the last configuration of $\mathsf{comp}(\mC,\vec{0})$ is halting it follows that $v(\seq(\mathsf{comp}(\mC,\vec{0})))\in\HConf=D_\mC$.\smallskip

Reciprocally, if $A\in\Thm(\Sigma_{\mathcal{C}},\sder_\Ct)$, we know that $A=\seq(\vec{C})$ for some finite non-empty sequence $\vec{C}=\tuple{C_0,\dots,C_k}$ of configurations, with $C_k$ halting. Thus, in order to 
show that $\vec{C}=\mathsf{comp}(\mC,\vec{0})$ it suffices to prove that $\vec{C}$ is a prefix of $\mathsf{comp}(\mC,\vec{0})$. We show, by induction on $j\leq k$ , that $\tuple{C_0,\dots,C_j}$ is a prefix of $\mathsf{comp}(\mC,\vec{0})$.

\begin{itemize}
\item If $C_0=\tuple{q,\vec{a}}$ and for $v\in\Val(\Ct)$ we have $v(\seq(\tuple{C_0}))\neq\mathrm{error}$ then, necessarily, $v(\seq(\tuple{C_0}))=\step^q_\mC(\mathrm{init},v(\enc(\vec{a})))=\mathrm{conf}_{q,v(\enc(\vec{a}))}$, and thus $q=\qi$ and $v(\enc(\vec{a}))\in\{\n_{= 0}\}^n\cup\{\n_{\geq 0}\}^n$; when $v$ extends $v_=$, in particular, it follows that $v(\enc(\vec{a}))\in\{\n_{= 0}\}^n$ and therefore $\vec{a}=\vec{0}$; we conclude that $C_0=\tuple{\qi,\vec{0}}$.
\item If $j>0$,  $C_{j-1}=\tuple{q,\vec{a}}$ is halting and $C_j=\tuple{q',\vec{b}}$ then, for $v\in\Val(\Ct)$, given that $v(\seq(\tuple{C_0,\dots,C_j}))\neq\mathrm{error}$ then, necessarily, 
$v(\seq(\tuple{C_0,\dots,C_j}))=\seq^{q'}_\Ct(\mathrm{conf}_{q,v(\enc(\vec{a}))},v(\enc(\vec{b})))=\mathrm{conf}_{q',v(\enc(\vec{b}))}$; this implies that $\delta(q)$ is defined and $q\not\in\mathsf{H}_\mathcal{C}$, and we are left with showing that $C_j=\nxt_\mC(C_{j-1})$; we need to consider three cases.

\begin{itemize}
\item If $\delta(q)=(\incr{i},q'')$, it follows that $q'=q''$, and for all $v\in\Val(\Ct)$ we have $v(b_i)\in\suc_\mC(v(a_i))$ and $v(b_\ell)=v(a_\ell)$ for $\ell\neq i$, and using Lemma~\ref{lemma-mach2matr}{\it (i-ii)} we conclude that $\vec{b}=\vec{a}+\vec{e_i}$.
\item If $\delta(q)=(\test{i},q'',q''')$ and $a_i\neq 0$, one has $v_=(\enc(a_i))\notin\zero_\Ct$ and thus $q'=q''$; even if $q''=q'''$, if for $v$ extending $v_\omega$, or extending some $v_c$ for $c\in\nats_0$, one has $v(\enc(a_i))\in\zero_\Ct$ and $v(\enc(\vec{a}))=v(\enc(\vec{b}))$, then $v(\enc(a_i))=v(\enc(b_i))=\n_{\geq 0}$, and since $\n_{\geq 0}\in\suc_\Ct(\n_{\geq 0})$ it is also the case that $v(a_i)\in\suc_\mC(v(b_i))$; it follows that for all $v\in\Val(\Ct)$ we have $v(a_i)\in\suc_\mC(v(b_i))$, and $v(b_\ell)=v(a_\ell)$ for $\ell\neq i$, and using Lemma~\ref{lemma-mach2matr}{\it (i-ii)} we conclude that $\vec{b}=\vec{a}-\vec{e_i}$.
\item If $\delta(q)=(\test{i},q'',q''')$ and $a_i= 0$, one has $v_=(\enc(a_i))=\n_{=0}\notin\suc_\Ct(v_=(\enc(b_i)))$ and thus $q'=q'''$; even if $q''=q'''$, if for $v$ extending $v_\omega$, or extending some $v_c$ for $c\in\nats_0$, one has $v(\enc(a_i))=\n_{\geq 0}\in\suc_\Ct(v(\enc(b_i)))$, then $v(\enc(a_i))=v(\enc(b_i))=\n_{\geq 0}\in\zero_\mC$; it follows that for all $v\in\Val(\Ct)$ we have $v(a_i)\in\zero_\mC$ and $v(\vec{b})=v(\vec{a})$, and using Lemma~\ref{lemma-mach2matr}{\it (ii)} we conclude that $\vec{b}=\vec{a}$.
\end{itemize}
In all three cases we conclude that $C_j=\nxt_\mC(C_{j-1})$, and $\vec{C}$ is a prefix of $\mathsf{comp}(\mC,\vec{0})$. Since $C_k$ is halting it follows that $\vec{C}=\mathsf{comp}(\mC,\vec{0})$.\qed
\end{itemize}
}

The following is a straightforward corollary of Proposition~\ref{mach2nmatr}.

\begin{corollary}\label{pre-halt2thm}
A counter machine $\mathcal{C}$ halts {when all counters are initially set to zero}  if and only if $\Thm(\Sigma_{\mathcal{C}},\sder_\Ct)\neq\emptyset$.
\end{corollary}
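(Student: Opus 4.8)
The plan is to derive the corollary directly from Proposition~\ref{mach2nmatr}, since the heavy lifting — establishing the precise correspondence between theorems of $\tuple{\Sigma_\mC,\sder_\Ct}$ and the (unique, if any) encoding of the halting computation — has already been carried out there. First I would unwind the definition of $\mathsf{HALT}$: by the stipulations in Section~\ref{sec3}, the machine $\mC$ halts when all counters are initially set to zero precisely when the computation $\mathsf{comp}(\mC,\vec{0})$ is a finite sequence. Thus the claim reduces to showing that $\mathsf{comp}(\mC,\vec{0})$ is finite if and only if $\Thm(\Sigma_{\mathcal{C}},\sder_\Ct)\neq\emptyset$.

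For the forward implication, I would assume $\mathsf{comp}(\mC,\vec{0})$ is finite and take the specific witness $A=\seq(\mathsf{comp}(\mC,\vec{0}))$. This $A$ satisfies the left-hand side of the biconditional in Proposition~\ref{mach2nmatr} by construction (the computation is finite and $A$ is exactly its encoding), so the proposition yields $A\in\Thm(\Sigma_{\mathcal{C}},\sder_\Ct)$, whence the set of theorems is non-empty.

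For the converse, I would assume $\Thm(\Sigma_{\mathcal{C}},\sder_\Ct)\neq\emptyset$ and pick an arbitrary theorem $A$. Applying the other direction of Proposition~\ref{mach2nmatr} to this $A$ gives that $\mathsf{comp}(\mC,\vec{0})$ is finite (and moreover that $A=\seq(\mathsf{comp}(\mC,\vec{0}))$, though only finiteness is needed here). Hence $\mC$ halts from the all-zero initial configuration, completing the equivalence.

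There is no genuine obstacle at this stage: the corollary is a purely logical repackaging of the proposition, and the only point deserving care is the bookkeeping observation that finiteness of $\mathsf{comp}(\mC,\vec{0})$ is literally the definition of halting. The real content — in particular the uniqueness of the theorem and the fidelity of the encoding to the machine's dynamics, which rely on the absorbing r\^ole of $\mathrm{error}$ and on Lemma~\ref{lemma-mach2matr} to pin down the arithmetic transitions — lives entirely inside Proposition~\ref{mach2nmatr}, so here one simply invokes it in both directions.
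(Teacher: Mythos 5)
Your proposal is correct and matches the paper exactly: the paper presents this corollary as an immediate consequence of Proposition~\ref{mach2nmatr}, obtained precisely by instantiating the biconditional with $A=\seq(\mathsf{comp}(\mC,\vec{0}))$ in one direction and with an arbitrary theorem in the other. Nothing further is needed.
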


As it is clear that one can compute $\Sigma_{\mathcal{C}}$ and $\Ct$ directly from $\mathcal{C}$, the construction provides the following computable reduction.

\begin{corollary}\label{halt2thm}
$\mathsf{HALT}\leq{\mathsf{\exists Thm}(\textsf{NMatr})}$.
\end{corollary}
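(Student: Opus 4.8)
The plan is to present the map $\mC \mapsto \tuple{\Sigma_\mC, \Ct}$ supplied by Definition~\ref{def-mach2matr} as the required computable reduction, and to verify its three defining properties: that it lands in the right problem domain, that it is effective, and that it preserves the answer. For the first, I would recall the remark made right after Definition~\ref{def-mach2matr}: for any counter machine $\mC$ the signature $\Sigma_\mC$ is finite and the Nmatrix $\Ct$ is finite-valued, so that $\tuple{\Sigma_\mC, \Ct}$ is genuinely an instance of $\mathsf{\exists Thm}(\mathsf{NMatr})$, i.e.\ $\Ct$ belongs to the class $\mathsf{NMatr}$ of all finite Nmatrices.

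Next I would check computability. The connectives of $\Sigma_\mC$, namely $\zero$, $\epsilon$, $\suc$ and one symbol $\step^q$ for each $q \in Q$, are read directly off the finite tuple $\tuple{n, Q, \qi, \delta}$ describing $\mC$; likewise the truth-value set $V_\mC = \Rm \cup \Conf \cup \{\init, \error\}$ and the interpretation tables $\zero_\Ct$, $\suc_\Ct$ and $\step^q_\Ct$ are specified by finite case distinctions that consult only $n$, $Q$ and the finitely many values of $\delta$. Thus an algorithm can output the complete finite description of $\tuple{\Sigma_\mC, \Ct}$ from a description of $\mC$, and the map is computable.

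Finally, correctness is precisely Corollary~\ref{pre-halt2thm}: the machine $\mC$ halts from the all-zero initial configuration, i.e.\ $\mathsf{comp}(\mC, \vec{0})$ is finite, exactly when $\Thm(\Sigma_\mC, \sder_\Ct) \neq \emptyset$, which by the definition of the problem is just the condition that $\tuple{\Sigma_\mC, \Ct}$ is a yes-instance of $\mathsf{\exists Thm}(\mathsf{NMatr})$. These three facts together yield $\mathsf{HALT} \leq \mathsf{\exists Thm}(\mathsf{NMatr})$. I expect no genuine obstacle at this stage, since all the substantive work has already been discharged in Proposition~\ref{mach2nmatr} and Corollary~\ref{pre-halt2thm}; the only point needing a little care is the (entirely routine) confirmation that the construction is effective and remains inside the finite-Nmatrix class, which is exactly what licenses calling the map a many-one reduction.
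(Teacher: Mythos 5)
Your proposal is correct and matches the paper's own argument exactly: the paper likewise takes the map $\mC \mapsto \tuple{\Sigma_\mC, \Ct}$ from Definition~\ref{def-mach2matr} as the reduction, notes that $\Sigma_\mC$ and $\Ct$ are computable from $\mC$ and yield a finite Nmatrix, and invokes Corollary~\ref{pre-halt2thm} for correctness. Your write-up simply makes explicit the routine effectiveness and well-typedness checks that the paper leaves as ``clear.''
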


Our main result now follows, as a direct consequence of the {undecidability of $\mathsf{HALT}$~\cite{minsky}} and Corollary~\ref{halt2thm}.

\begin{theorem}\label{exthm}
$\mathsf{\exists Thm}(\textsf{NMatr})$ is undecidable.
\end{theorem}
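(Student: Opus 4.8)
The plan is to obtain the theorem as an immediate consequence of the reduction already established in Corollary~\ref{halt2thm}, together with the standard computability-theoretic fact recalled in Section~\ref{subcompprob}: if $\mathsf{P}\leq\mathsf{Q}$ and $\mathsf{Q}$ is decidable, then so is $\mathsf{P}$. No new construction is required here; all the genuine work has already been carried out in building $\Sigma_\mC$ and $\Ct$ from $\mC$ and in verifying Proposition~\ref{mach2nmatr}.

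Concretely, I would argue by contraposition. Suppose, towards a contradiction, that $\mathsf{\exists Thm}(\mathsf{NMatr})$ were decidable. By Corollary~\ref{halt2thm} we have the computable reduction $\mathsf{HALT}\leq\mathsf{\exists Thm}(\mathsf{NMatr})$, realized by the map $\mC\mapsto\tuple{\Sigma_\mC,\Ct}$: this map is computable since $\Sigma_\mC$ and $\Ct$ are obtained effectively from $\mC$, and by Corollary~\ref{pre-halt2thm} it sends halting instances to positive instances and non-halting instances to negative instances. Feeding the output of this reduction into a hypothetical decision procedure for $\mathsf{\exists Thm}(\mathsf{NMatr})$ would then decide $\mathsf{HALT}$, contradicting the undecidability of $\mathsf{HALT}$ established by Minsky~\cite{minsky}. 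Hence $\mathsf{\exists Thm}(\mathsf{NMatr})$ is undecidable.

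I would finally note that the substance of the argument lies not in this last deduction but in the correctness of the reduction, namely Proposition~\ref{mach2nmatr}, which guarantees that $\tuple{\Sigma_\mC,\sder_\Ct}$ possesses a theorem precisely when $\mathsf{comp}(\mC,\vec{0})$ is finite. That is the only place where something could go wrong, and it is already discharged upstream; at the level of this theorem there is no remaining obstacle, the statement being a routine combination of a known undecidable problem with an effective, correctness-verified reduction into $\mathsf{\exists Thm}(\mathsf{NMatr})$.
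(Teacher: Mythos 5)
Your proof is correct and matches the paper's own argument exactly: the paper also derives Theorem~\ref{exthm} as an immediate consequence of the undecidability of $\mathsf{HALT}$~\cite{minsky} and the computable reduction of Corollary~\ref{halt2thm}. Your added remarks about where the real work lies (Proposition~\ref{mach2nmatr}) are accurate but do not change the fact that this is the same routine deduction the paper performs.
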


\section{Undecidability of $\mathsf{Eqv}(\mathsf{NMatr})$}\label{sec4}

In this section we present our main result, the undecidability of the equivalence problem for Nmatrices. The proof shall consist of a computable reduction from the theorem existence problem. The reduction is based on a surprisingly simple construction on Nmatrices, which bears similarities to results on \emph{infected} Nmatrices such as those presented in~\cite{ISMVL}. The construction essentially consists in adding to a given Nmatrix a designated copy of each of its undesignated truth-values.\smallskip

\begin{definition}\label{def-infect}
Let $\Sigma$ be a signature and $\Mt = \tuple{V,D,\cdot_\Mt}$ a $\Sigma$-Nmatrix. The \emph{tilded} Nmatrix $\widetilde{\Mt} = \tuple{\widetilde{V},,\widetilde{D},\cdot_{\widetilde{\Mt}}}$ is defined by 
\begin{itemize}
\item $\widetilde{V}=V\cup\{\widetilde{x}:x\in V\setminus D\}$,
\item $\widetilde{D}=D\cup\{\widetilde{x}:x\in V\setminus D\}$, and
\item for all $k\in\nats_0$, $\conn\in\Sigma^{(k)}$ and $x_1,\dots,x_k\in\widetilde{V}$, $$\conn_{\widetilde{\Mt}}(x_1,\dots,x_k)=u^{-1}(\conn_\Mt(u(x_1),\dots,u(x_k))),$$ 
\end{itemize}
where $u:\widetilde{V}\to V$ is the \emph{tilde-forgetful} function with $u(x)=u(\widetilde{x})=x$ for $x\in V$.\hfill$\triangle$%
\end{definition}

Note that $\widetilde{\Mt}$ is finite-valued precisely when $\Mt$ is finite-valued. Despite its simplicity, this innocent looking \emph{tildeing}  construction has an interesting meaning, as illustrated by the following examples.

\begin{example}{(Tilded Nmatrices)}\label{tilded}\smallskip
\begin{itemize}
\item 
Let $\Sigma$ be the signature with a single 1-place connective $\flat$ of Example~\ref{unarys}. Recall also the $\Sigma$-matrix $\Mt_7=(\{0,1\},\{1\},\cdot_{\Mt_7})$, as well as its  tilded version, the three-valued $\Sigma$-Nmatrix 
$\widetilde{\Mt}_7=(\{0,\widetilde{0},1\},\{\widetilde{0},1\},\cdot_{\widetilde{\Mt}_7})$, as defined below.\smallskip

\hspace*{3cm}
\begin{minipage}{3cm}
\begin{center}
\begin{tabular}{C{5pt}|C{5mm}}
 & ${\flat_{\Mt_7}}$\\[1mm]
\hline
$0$& $ 1 $\\
$1$ &$ 0 $
\end{tabular}
\end{center}
\end{minipage}
%%%
\begin{minipage}{3cm}
\begin{center}
\begin{tabular}{C{5pt}|C{5mm}}
 & ${\flat_{\widetilde{\Mt}_7}}$\\[1mm]
\hline
$0$& $ 1 $\\
$\widetilde{0}$ & $ 1 $\\
$1$ &$ 0,\widetilde{0} $
\end{tabular}
\end{center}
\end{minipage}\mbox{}\\

It is clear that $\tuple{\Sigma,\sder_{\Mt_7}}$ is the negation-only fragment of classical logic, and can be axiomatized by the following schematic rules (see~\cite{rautenberg}).

$$\frac{p}{\;\flat\flat p\;}\qquad\frac{\;\flat\flat p\;}{p}\qquad\frac{\;p,\flat p\;}{q}$$

In particular $\Thm(\Sigma,\sder_{\Mt_7})=\emptyset$, and thus, as we will show below in Proposition~\ref{infect}, $\widetilde{\Mt}_7$ is equivalent to $\Ut_\Sigma$. For instance, $\{A\}\not\vdash_{\widetilde{\Mt}_7}\flat\flat A$, as can be witnessed by any valuation $\widetilde{v}\in\Val(\widetilde{\Mt}_7)$ such that $\widetilde{v}(A)=\widetilde{0}$, $\widetilde{v}(\flat A)=1$, and $\widetilde{v}(\flat\flat A)=0$.\smallskip

\item 
Now, let $\Sigma_\limp$ be the signature with a single 2-place connective $\limp$, and consider the $\Sigma_\limp$-matrix $\It=(\{0,1\},\{1\},\cdot_{\It})$, as well as its  tilded version, the three-valued $\Sigma_\limp$-Nmatrix 
$\widetilde{\It}=(\{0,\widetilde{0},1\},\{\widetilde{0},1\},\cdot_{\widetilde{\Mt}_7})$, as defined below.\smallskip

\hspace*{3cm}
\begin{minipage}{3cm}
\begin{center}
\begin{tabular}{C{11pt}|C{4.5mm}|C{4.5mm}}
${\limp_\It}$ & $0$ & $1$\\[1mm]
\hline
$0$& $ 1 $ & $ 1 $\\
$1$ &$ 0 $ & $ 1 $
\end{tabular}
\end{center}
\end{minipage}\qquad
%%%
\begin{minipage}{3cm}
\begin{center}
\begin{tabular}{C{11pt}|C{4.5mm}|C{4.5mm}|C{4.5mm}}
${\limp_{\,\widetilde{\It}}}$ & $0$ & $\widetilde{0}$ & $1$\\[1mm]
\hline
$0$& $ 1 $ & $ 1 $ & $ 1 $\\
$\widetilde{0}$& $ 1 $ & $ 1 $& $ 1 $\\
$1$ &$ 0,\widetilde{0} $ & $ 0,\widetilde{0} $& $ 1 $
\end{tabular}
\end{center}
\end{minipage}
\mbox{}\\

It is clear that $\tuple{\Sigma_\limp,\sder_{\It}}$ is the implication-only fragment of classical logic, and can be axiomatized by the following schematic rules, or actually three familiar axioms and one proper rule, {modus ponens} (see~\cite{rautenberg}).

$$\frac{}{\;A\limp(B\limp A)\;}\qquad\frac{}{\;(A\limp(B\limp C))\limp((A\limp B)\limp(A\limp C))\;}$$\mbox{}\smallskip
$$\frac{}{\;((A\limp B)\limp A)\limp A\;}\qquad\qquad\frac{\;A,A\limp B\;}{B}$$\mbox{}\smallskip

$\Thm(\Sigma_\limp,\sder_{\It})\neq\emptyset$, and thus, as we will show below in Proposition~\ref{infect}, the tildeing construction retains all the theorems (not just the axioms). For instance, we have $(A\limp A)\in \Thm(\Sigma_\limp,\sder_{\,\widetilde{\It}})$, as for every $\widetilde{v}\in\Val(\widetilde{\It})$ we have that $(v(A)\limp_{\,\widetilde{\It}}v(A))={1}$. Still, other consequences are lost. Namely, modus ponens no longer holds, i.e., $\{A,A\limp B\}\not\vdash_{\,\widetilde{I}} B$, as can be witnessed by any valuation $\widetilde{v}\in\Val(\widetilde{\It})$ such that 
$\widetilde{v}(A)=1$, $\widetilde{v}(B)=0$, and $\widetilde{v}(A\limp B)=\widetilde{0}$.\hfill$\triangle$\smallskip
\end{itemize}
\end{example}

We now show that the tildeing construction indeed has the properties suggested by the previous examples. The construction always yields a Nmatrix whose logic is closely related with the logics of unconstrained Nmatrices, as defined in Example~\ref{unconstrained}. Namely, the logic of $\widetilde{\Mt}$ is \emph{quasi-unconstrained}, the exception being the theorems (if any) of the logic of the original Nmatrix $\Mt$. We start by proving a useful lemma.

\begin{lemma}\label{lemma-infect}
Let $\Sigma$ be a signature and $\Mt = \tuple{V,D,\cdot_\Mt}$ a $\Sigma$-Nmatrix. For every function $\widetilde{v}:L_\Sigma(P)\to\widetilde{V}$, $\widetilde{v}\in\Val(\widetilde{\Mt})$ if and only if $u\circ \widetilde{v}\in\Val(\Mt)$, where $u:\widetilde{V}\to V$ is the tilde-forgetful function of Definition~\ref{def-infect}.
\end{lemma}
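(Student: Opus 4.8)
The plan is to prove the biconditional by unwinding the definition of a valuation in each direction, with the key engine being the relationship $\conn_{\widetilde{\Mt}}(x_1,\dots,x_k)=u^{-1}(\conn_\Mt(u(x_1),\dots,u(x_k)))$ from Definition~\ref{def-infect}. The essential observation, which I would isolate first, is a pointwise equivalence: for any $k$-ary connective $\conn$, any formulas $A_1,\dots,A_k$, and any $\widetilde{v}:L_\Sigma(P)\to\widetilde{V}$, the membership condition $\widetilde{v}(\conn(A_1,\dots,A_k))\in\conn_{\widetilde{\Mt}}(\widetilde{v}(A_1),\dots,\widetilde{v}(A_k))$ holds if and only if $(u\circ\widetilde{v})(\conn(A_1,\dots,A_k))\in\conn_\Mt((u\circ\widetilde{v})(A_1),\dots,(u\circ\widetilde{v})(A_k))$. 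This single equivalence, quantified over all connectives and all tuples of formulas, is exactly what is needed, since being a valuation of an Nmatrix amounts to condition~(\ref{valcond}) holding for every formula of the form $\conn(A_1,\dots,A_k)$.

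Second, I would verify that pointwise equivalence by a direct computation using the defining formula for $\conn_{\widetilde{\Mt}}$. Writing $y=\widetilde{v}(\conn(A_1,\dots,A_k))$ and $x_i=\widetilde{v}(A_i)$, the left-hand condition reads $y\in u^{-1}(\conn_\Mt(u(x_1),\dots,u(x_k)))$, which by the definition of preimage is equivalent to $u(y)\in\conn_\Mt(u(x_1),\dots,u(x_k))$. But $u(y)=(u\circ\widetilde{v})(\conn(A_1,\dots,A_k))$ and $u(x_i)=(u\circ\widetilde{v})(A_i)$, so this is precisely the right-hand condition. The crux here is simply the set-theoretic fact that $z\in u^{-1}(S)\iff u(z)\in S$, applied to $S=\conn_\Mt(u(x_1),\dots,u(x_k))$; everything else is bookkeeping about which function is being composed with $\widetilde{v}$.

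Third, I would assemble the global statement from the pointwise one. For the forward direction, suppose $\widetilde{v}\in\Val(\widetilde{\Mt})$; then condition~(\ref{valcond}) holds for $\widetilde{v}$ at every formula $\conn(A_1,\dots,A_k)$, and by the pointwise equivalence the corresponding condition holds for $u\circ\widetilde{v}$ at every such formula, so $u\circ\widetilde{v}\in\Val(\Mt)$. The converse is entirely symmetric, since the pointwise claim is itself a biconditional and places no constraints beyond the membership conditions. I would note that no special treatment of nullary connectives ($k=0$) is required, as the computation goes through with empty argument tuples.

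I do not expect a genuine obstacle here: the statement is a definitional unwinding, and the only subtlety worth flagging explicitly is that $u\circ\widetilde{v}$ automatically lands in $V$ (so it is a legitimate candidate valuation of $\Mt$), together with the fact that the equivalence is being asserted at the level of \emph{arbitrary} functions $\widetilde{v}$, not just valuations --- which is why the biconditional can be read off connective-by-connective without any inductive argument on formula structure. The mild care needed is to keep straight that $u$ is not injective (each undesignated $x$ has the two preimages $x,\widetilde{x}$), so $u^{-1}$ genuinely enlarges sets; but since we only ever test membership against $u^{-1}(\cdot)$, this non-injectivity causes no difficulty.
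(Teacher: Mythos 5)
Your proposal is correct and follows essentially the same route as the paper: both proofs unwind the definition of $\conn_{\widetilde{\Mt}}$ and reduce the valuation condition~(\ref{valcond}) to the set-theoretic equivalence $y\in u^{-1}(S)\iff u(y)\in S$, quantified over all connectives and argument tuples, with no induction on formulas. The only difference is presentational: the paper writes it as a single chain of ``iff''s, while you isolate the pointwise equivalence as a separate step before quantifying.
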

\proof{Note that

$\widetilde{v}\in\Val(\widetilde{\Mt})$ iff

\quad${}$for all $k\in\nats_0,\conn\in\Sigma^{(k)}$ and $A_1,\dots,A_k\in L_\Sigma(P)$
 
\qquad$\widetilde{v}(\conn(A_1,\dots,A_k))\in\conn_{\widetilde{\Mt}}(\widetilde{v}(A_1),\dots,\widetilde{v}(A_k))$ iff
 
\qquad$\widetilde{v}(\conn(A_1,\dots,A_k))\in u^{-1}(\conn_{{\Mt}}(u(\widetilde{v}(A_1)),\dots,u(\widetilde{v}(A_k))))$ iff

\qquad$u(\widetilde{v}(\conn(A_1,\dots,A_k)))\in \conn_{{\Mt}}(u(\widetilde{v}(A_1)),\dots,u(\widetilde{v}(A_k)))$ iff
 
\qquad$(u\circ \widetilde{v})(\conn(A_1,\dots,A_k))\in \conn_{{\Mt}}((u\circ \widetilde{v})(A_1),\dots,(u\circ \widetilde{v})(A_k))$ iff

 $u\circ \widetilde{v}\in\Val({\Mt})$.
% 
% 
%
% 
%}
% 
% 
 \qed}\\

\begin{proposition}\label{infect}
Let $\Sigma$ be a signature and $\Mt$ a $\Sigma$-Nmatrix. 
For every $\Gamma\cup\{A\}\subseteq L_\Sigma(P)$, we have
$$\Gamma\sder_{\widetilde{\Mt}} A\textrm{ \underline{if and only if} } A\in\Gamma\textrm{ or }A\in\Thm(\Sigma,\sder_\Mt).$$
\end{proposition}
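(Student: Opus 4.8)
The plan is to prove both implications, exploiting the fact that Lemma~\ref{lemma-infect} lets us freely choose, formula by formula, whether to ``tilde'' a value, subject only to the constraint that the underlying $\Mt$-valuation be fixed.

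First, for the right-to-left direction, I would split into two cases. If $A\in\Gamma$, then $\Gamma\sder_{\widetilde{\Mt}}A$ is immediate by reflexivity of the consequence relation $\sder_{\widetilde{\Mt}}$. If instead $A\in\Thm(\Sigma,\sder_\Mt)$, I claim $A$ is actually a theorem of $\widetilde{\Mt}$, whence $\Gamma\sder_{\widetilde{\Mt}}A$ follows by monotonicity. Indeed, take any $\widetilde{v}\in\Val(\widetilde{\Mt})$; by Lemma~\ref{lemma-infect} we have $u\circ\widetilde{v}\in\Val(\Mt)$, so $u(\widetilde{v}(A))=(u\circ\widetilde{v})(A)\in D$. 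The crucial point is that tildes are attached only to undesignated values, so $u(\widetilde{v}(A))\in D$ forces $\widetilde{v}(A)\in D\subseteq\widetilde{D}$ (a value $\widetilde{x}$ would give $u(\widetilde{x})=x\notin D$). Hence $\widetilde{v}(A)\in\widetilde{D}$ for all valuations, i.e.\ $A\in\Thm(\Sigma,\sder_{\widetilde{\Mt}})$.

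For the left-to-right direction I would argue by contraposition: assuming $A\notin\Gamma$ and $A\notin\Thm(\Sigma,\sder_\Mt)$, I construct a single valuation of $\widetilde{\Mt}$ witnessing $\Gamma\not\sder_{\widetilde{\Mt}}A$. Since $A$ is not a theorem of $\Mt$, fix $v\in\Val(\Mt)$ with $v(A)\in V\setminus D$. Now define $\widetilde{v}:L_\Sigma(P)\to\widetilde{V}$ by tildeing every undesignated value except at $A$: set $\widetilde{v}(A)=v(A)$, and for $B\neq A$ put $\widetilde{v}(B)=v(B)$ if $v(B)\in D$ and $\widetilde{v}(B)=\widetilde{v(B)}$ if $v(B)\in V\setminus D$. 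By design $u\circ\widetilde{v}=v\in\Val(\Mt)$, so Lemma~\ref{lemma-infect} gives $\widetilde{v}\in\Val(\widetilde{\Mt})$. Because $A\notin\Gamma$, every $B\in\Gamma$ satisfies $B\neq A$ and hence $\widetilde{v}(B)\in\widetilde{D}$, so $\widetilde{v}(\Gamma)\subseteq\widetilde{D}$; meanwhile $\widetilde{v}(A)=v(A)\in V\setminus D$ is undesignated in $\widetilde{\Mt}$. Thus $\Gamma\not\sder_{\widetilde{\Mt}}A$.

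The only delicate step — and the heart of the argument — is this last construction: it is legitimate precisely because, by Lemma~\ref{lemma-infect}, the tildeing decisions at distinct formulas are completely independent once $u\circ\widetilde{v}$ is pinned down to a genuine $\Mt$-valuation. The hypothesis $A\notin\Gamma$ is what makes the two competing requirements compatible, since we need $A$ to remain undesignated while every formula of $\Gamma$ is pushed into $\widetilde{D}$; were $A\in\Gamma$, no such valuation could exist, matching the statement.
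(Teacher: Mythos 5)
Your proof is correct and takes essentially the same approach as the paper: the same key construction (the valuation that keeps the value at $A$ and tildes every other undesignated value), justified by the same appeal to Lemma~\ref{lemma-infect}, and the same argument for the right-to-left direction via reflexivity, monotonicity, and the fact that $u^{-1}(D)=D$. The only difference is that you phrase the hard direction contrapositively, fixing one witnessing $\Mt$-valuation, whereas the paper argues directly by quantifying over all $v\in\Val(\Mt)$ — a purely organizational distinction.
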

\proof{ Suppose that $\Gamma\sder_{\widetilde{\Mt}} A$, with $A\notin \Gamma$, and let $v\in\Val(\Mt)$. Consider the function $\widetilde{v}:L_\Sigma(P)\to\widetilde{V}$ defined as follows for each formula $B\in L_\Sigma(P)$.
 $$\widetilde{v}(B)=\left\{\begin{array}{cl}
                       v(B) & \textrm{if $B=A$}\\
                       x & \textrm{if $B\neq A$ and $v(B)=x\in D$}\\
                       \widetilde{x} & \textrm{if $B\neq A$ and $v(B)=x\notin D$}
                       \end{array}\right.$$         
 It is immediate from this definition that $u\circ \widetilde{v}=v$, and so $\widetilde{v}\in\Val(\widetilde{\Mt})$ according to Lemma~\ref{lemma-infect}. It is also straightforward to check that $\widetilde{v}(B)\in\widetilde{D}$ for every formula $B\in L_\Sigma\setminus\{A\}$, and thus $\widetilde{v}(\Gamma)\subseteq\widetilde{D}$. Given that $\Gamma\sder_{\widetilde{\Mt}} A$ it must also be the case that $\widetilde{v}(A)=v(A)\in \widetilde{D}$. However, $v(A)\in V$ so 
 $v(A)\in\widetilde{D}\cap V=D$, and we can conclude that $A\in\Thm(\Sigma,\sder_\Mt)$.\smallskip
 
Reciprocally, as $\sder_{\widetilde{\Mt}}$ always satisfies reflexivity, we need only consider the case when $A\in\Thm(\Sigma,\sder_\Mt)$. If $v\in\Val(\widetilde{\Mt})$ then Lemma~\ref{lemma-infect} guarantees $u\circ v\in\Val(\Mt)$, and thus it must be the case that $u(v(A))\in D$.
Therefore, we have that $v(A)\in u^{-1}(D)=D\subseteq \widetilde{D}$, and we can conclude that $\emptyset\;\sder_{\widetilde{\Mt}}A$, and by monotonicity $\Gamma\sder_{\widetilde{\Mt}}A$ for every $\Gamma\subseteq L_\Sigma(P)$.\qed}\\

The following is a straightforward corollary of Proposition~\ref{infect} and the characterization of the logic of unconstrained Nmatrices, as introduced in Example~\ref{unconstrained}.

\begin{corollary}\label{pre-thm2eqv}
Let $\Sigma$ be a signature and $\Mt$ a $\Sigma$-Nmatrix. We have
$\Thm(\Sigma,\sder_\Mt)\neq\emptyset$ if and only if $\widetilde{\Mt}$ and $\Ut_\Sigma$ are \underline{not} equivalent.
\end{corollary}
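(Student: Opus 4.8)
The plan is to read off both consequence relations in closed form and then compare them pairwise. From Example~\ref{unconstrained} I would recall that $\Ut_\Sigma$ induces the \emph{discrete} logic, so that $\Gamma\sder_{\Ut_\Sigma}A$ holds exactly when $A\in\Gamma$; in particular this logic has no theorems at all. From Proposition~\ref{infect} I already have the analogous explicit description of the tilded logic: $\Gamma\sder_{\widetilde{\Mt}}A$ holds exactly when $A\in\Gamma$ or $A\in\Thm(\Sigma,\sder_\Mt)$. Placing the two descriptions side by side, the relations already agree on every pair with $A\in\Gamma$, so the only possible source of disagreement is a pair with $A\notin\Gamma$ for which $A\in\Thm(\Sigma,\sder_\Mt)$.

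For the direction $\Thm(\Sigma,\sder_\Mt)\neq\emptyset\Rightarrow$ non-equivalence, I would pick any theorem $A\in\Thm(\Sigma,\sder_\Mt)$ and take $\Gamma=\emptyset$. By Proposition~\ref{infect} this gives $\emptyset\sder_{\widetilde{\Mt}}A$, whereas $\emptyset\sder_{\Ut_\Sigma}A$ would force $A\in\emptyset$, which is impossible. Thus the single pair $(\emptyset,A)$ witnesses $\sder_{\widetilde{\Mt}}\neq\sder_{\Ut_\Sigma}$, so the two Nmatrices are not equivalent.

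For the converse I would argue by contraposition. Assuming $\Thm(\Sigma,\sder_\Mt)=\emptyset$, the second disjunct in the characterization from Proposition~\ref{infect} never applies, so $\Gamma\sder_{\widetilde{\Mt}}A$ reduces to $A\in\Gamma$, which is exactly the characterization of $\sder_{\Ut_\Sigma}$ recalled from Example~\ref{unconstrained}. Hence the two relations coincide and $\widetilde{\Mt}$ and $\Ut_\Sigma$ are equivalent, completing the biconditional.

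I do not expect a genuine obstacle here: once both relations are written in closed form the statement is an immediate bookkeeping consequence, which is exactly why the excerpt flags it as a ``straightforward corollary''. The only point needing a little care is to invoke the two halves of Proposition~\ref{infect} in the correct direction and to produce a concrete witnessing pair rather than merely asserting the inequality; choosing $\Gamma=\emptyset$ against a theorem makes the witness as simple as possible and keeps the reduction to $\mathsf{\exists Thm}(\mathsf{NMatr})$ used in Section~\ref{sec4} clean.
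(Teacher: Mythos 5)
Your proposal is correct and follows exactly the route the paper intends: the corollary is obtained by combining the closed-form description of $\sder_{\widetilde{\Mt}}$ from Proposition~\ref{infect} with the discrete characterization of $\sder_{\Ut_\Sigma}$ from Example~\ref{unconstrained}, and your witness $(\emptyset,A)$ for a theorem $A$ together with the contrapositive argument is precisely the ``straightforward'' comparison the paper leaves implicit.
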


When we are working with a finite signature $\Sigma$ and a finite-valued $\Sigma$-Nmatrix $\Mt$, it is clear that one can compute $\Ut_\Sigma$ from $\Sigma$, and $\widetilde{M}$ from $\Mt$, which therefore provide the following computable reduction.

\begin{corollary}\label{thm2eqv}
$\mathsf{\exists Thm}(\textsf{NMatr})\leq\overline{\mathsf{Eqv}(\textsf{NMatr})}$.
\end{corollary}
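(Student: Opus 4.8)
The plan is to package Corollary~\ref{pre-thm2eqv} into the formal shape of a many-one reduction, that is, to exhibit a total computable map from instances of $\mathsf{\exists Thm}(\textsf{NMatr})$ to instances of $\overline{\mathsf{Eqv}(\textsf{NMatr})}$ that preserves the answer. An instance of $\mathsf{\exists Thm}(\textsf{NMatr})$ is a pair $\tuple{\Sigma,\Mt}$ with $\Sigma$ a finite signature and $\Mt$ a finite $\Sigma$-Nmatrix. To such an instance I would associate the triple $\tuple{\Sigma,\widetilde{\Mt},\Ut_\Sigma}$, keeping the same signature and producing the two $\Sigma$-Nmatrices to be compared, thereby forming a candidate input for the equivalence problem.

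First I would check that this assignment lands among the admissible inputs of $\overline{\mathsf{Eqv}(\textsf{NMatr})}$, which requires that both output Nmatrices be \emph{finite} over the common finite signature $\Sigma$. Here $\Ut_\Sigma$ is finite-valued by construction (it has exactly two truth-values), and $\widetilde{\Mt}$ is finite-valued precisely because $\Mt$ is, as already observed right after Definition~\ref{def-infect}. Since $\Sigma$ is finite, the pair $\tuple{\widetilde{\Mt},\Ut_\Sigma}$ is a genuine instance of $\overline{\mathsf{Eqv}(\textsf{NMatr})}$.

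Next I would verify computability. The Nmatrix $\Ut_\Sigma$ is read off directly from $\Sigma$ by setting $\conn_{\Ut_\Sigma}(\vec{x})=\{0,1\}$ for every $\conn\in\Sigma^{(k)}$, while $\widetilde{\Mt}$ is obtained from the finite data $\tuple{V,D,\cdot_\Mt}$ by the purely finitary recipe of Definition~\ref{def-infect}: adjoin a fresh designated copy $\widetilde{x}$ of each undesignated $x\in V\setminus D$, and tabulate each $\conn_{\widetilde{\Mt}}$ from $\conn_\Mt$ via the tilde-forgetful function $u$. Both procedures terminate on finite inputs, so $\tuple{\Sigma,\Mt}\mapsto\tuple{\Sigma,\widetilde{\Mt},\Ut_\Sigma}$ is total and computable.

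Finally, correctness of the reduction is immediate from Corollary~\ref{pre-thm2eqv}: $\tuple{\Sigma,\Mt}$ is a $\mathsf{yes}$-instance of $\mathsf{\exists Thm}(\textsf{NMatr})$, i.e.\ $\Thm(\Sigma,\sder_\Mt)\neq\emptyset$, exactly when $\widetilde{\Mt}$ and $\Ut_\Sigma$ are \emph{not} equivalent, i.e.\ when $\tuple{\Sigma,\widetilde{\Mt},\Ut_\Sigma}$ is a $\mathsf{yes}$-instance of $\overline{\mathsf{Eqv}(\textsf{NMatr})}$. I do not expect any genuine obstacle at this stage, since all the substantive work has already been carried out in Proposition~\ref{infect} and its Corollary~\ref{pre-thm2eqv}; what remains is only the bookkeeping that certifies the assignment to be a computable many-one reduction producing admissible inputs. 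The single point most worth stating explicitly is the finiteness-preservation of the tildeing construction, because without it the produced pair would fall outside the finite equivalence problem and the reduction would be ill-typed.
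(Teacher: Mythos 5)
Your proposal is correct and follows exactly the paper's own route: map $\tuple{\Sigma,\Mt}$ to $\tuple{\Sigma,\widetilde{\Mt},\Ut_\Sigma}$, note that both output Nmatrices are finite and computable from the input, and invoke Corollary~\ref{pre-thm2eqv} for correctness. The paper states this more tersely, but the content is identical; your only addition is the explicit bookkeeping (finiteness-preservation, admissibility of inputs), which is sound.
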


Our main result now follows, as a direct consequence of Theorem~\ref{exthm}, Corollary~\ref{thm2eqv}, and the fact that a problem is as decidable as its dual.

\begin{theorem}\label{main}
$\mathsf{Eqv}(\textsf{NMatr})$ is undecidable.
\end{theorem}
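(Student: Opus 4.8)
The plan is to derive the undecidability of $\mathsf{Eqv}(\textsf{NMatr})$ entirely by composing the results already in place, with no fresh construction required. The three ingredients are Theorem~\ref{exthm}, the reduction recorded in Corollary~\ref{thm2eqv}, and the elementary computability facts recalled in Section~\ref{subcompprob}: that a computable reduction $\mathsf{P}\leq\mathsf{Q}$ transfers decidability from $\mathsf{Q}$ to $\mathsf{P}$ (equivalently, transfers undecidability from $\mathsf{P}$ to $\mathsf{Q}$), and that a problem is decidable exactly when its dual is.

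First I would invoke Theorem~\ref{exthm} to fix that $\mathsf{\exists Thm}(\textsf{NMatr})$ is undecidable. Next, Corollary~\ref{thm2eqv} supplies a computable reduction
$$\mathsf{\exists Thm}(\textsf{NMatr})\leq\overline{\mathsf{Eqv}(\textsf{NMatr})}.$$
Since the source of this reduction is undecidable, the target $\overline{\mathsf{Eqv}(\textsf{NMatr})}$ cannot be decidable either: if it were, composing its decision procedure with the reduction would decide $\mathsf{\exists Thm}(\textsf{NMatr})$, a contradiction. Thus $\overline{\mathsf{Eqv}(\textsf{NMatr})}$ is undecidable.

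Finally, because a problem and its dual stand or fall together with respect to decidability, the undecidability of $\overline{\mathsf{Eqv}(\textsf{NMatr})}$ immediately yields the undecidability of $\mathsf{Eqv}(\textsf{NMatr})$, as claimed. The only point demanding care is bookkeeping on the direction of the reduction and on which side the dual sits: the tildeing construction of Definition~\ref{def-infect} turns ``$\Mt$ has a theorem'' into ``$\widetilde{\Mt}$ and $\Ut_\Sigma$ are \emph{not} equivalent'', so the reduction naturally lands on the dual $\overline{\mathsf{Eqv}(\textsf{NMatr})}$ rather than on $\mathsf{Eqv}(\textsf{NMatr})$ itself; passing to the dual at the very end is precisely what closes the argument. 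There is no genuine obstacle beyond this chaining, as all the substantive work---the counter-machine encoding behind Theorem~\ref{exthm} and the quasi-unconstrained behaviour of $\widetilde{\Mt}$ behind Corollary~\ref{thm2eqv}---has already been carried out.
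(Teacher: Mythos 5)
Your proposal is correct and follows exactly the paper's own argument: Theorem~\ref{exthm} plus the reduction of Corollary~\ref{thm2eqv} give undecidability of $\overline{\mathsf{Eqv}(\textsf{NMatr})}$, and the fact that a problem is as decidable as its dual finishes the proof. Your careful bookkeeping about why the reduction lands on the dual (the tildeing construction witnesses \emph{non}-equivalence exactly when a theorem exists) matches the paper's intent precisely.
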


\section{Nmatrix equivalence in practice}\label{sec5}

We have established that $\mathsf{Eqv}(\textsf{NMatr})$ is undecidable. Nevertheless, it is clear that $\overline{\mathsf{Eqv}(\textsf{NMatr})}$ is semidecidable. Indeed, if two given finite $\Sigma$-Nmatrices are not equivalent, we can always use brute-force to search for a finite set $\Gamma \cup \{A\} \subseteq L_\Sigma(P)$ such that either $\Gamma \vdash_{\Mt_1} A$ and $\Gamma \not\vdash_{\Mt_2} A$, or $\Gamma \vdash_{\Mt_2} A$ and $\Gamma \not\vdash_{\Mt_1} A$. This means that, in practice, showing that two Nmatrices define distinct logics can always be attained by computing/exhibiting an appropriate counterexample, as we did already in Examples~\ref{unarys} and~\ref{tilded}.\smallskip

 In the following subsections, we illustrate various techniques that can be used in order to prove that two finite Nmatrices define the same logic, despite the general undecidability of the problem.

\subsection{Axiomatizable logics}

The problem of determining whether two given Nmatrices are equivalent is considerably simplified when we have access to finite axiomatizations for the logics they define. 
As usual, a set of \emph{rules} $R=\{\frac{\,\Gamma_i\,}{A_i}:i\in I\}$ where each $\Gamma_i\cup\{A_i\}\subseteq L_\Sigma(P)$ \emph{axiomatizes} the logic $\tuple{\Sigma,\sder}$ where $\sder$ is the least consequence relation such that $\Gamma_i\sder A_i$ for each $i\in I$. Further, $R$ is said to be a \emph{finite axiomatization} when $I$ is a finite set, as well as every $\Gamma_i$.

\begin{proposition}\label{axprop}
Let $\Sigma$ be a signature, and $\tuple{\Sigma,\sder_1}$,   $\tuple{\Sigma,\sder_2}$ be logics. 
If $\tuple{\Sigma,\sder_1}$ is axiomatized by $R=\{\frac{\,\Gamma_i\,}{A_i}:i\in I\}$
then 
$$\text{$\sder_1{\subseteq} \sder_2$ if and only if  $\Gamma_i\sder_2 A_i$ for every $i\in I$.}$$
\end{proposition}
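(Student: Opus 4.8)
The plan is to prove both directions of the biconditional, with the forward direction being essentially trivial and the backward direction carrying the real content. For the forward direction, suppose $\sder_1 {\subseteq} \sder_2$. Since $R$ axiomatizes $\tuple{\Sigma,\sder_1}$, we have $\Gamma_i \sder_1 A_i$ for every $i \in I$ by definition of axiomatization, and hence $\Gamma_i \sder_2 A_i$ immediately follows from the inclusion. This requires no real work.

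For the backward direction, assume $\Gamma_i \sder_2 A_i$ for every $i \in I$; the goal is to show $\sder_1 {\subseteq} \sder_2$. The key observation is that $\sder_1$ is defined as the \emph{least} consequence relation containing all the pairs $(\Gamma_i, A_i)$. I would therefore argue as follows: the relation $\sder_2$ is itself a consequence relation (it is a logic, so it satisfies reflexivity, monotonicity, transitivity, and substitution-invariance), and by hypothesis it contains every pair $(\Gamma_i, A_i)$ coming from a rule of $R$. Since $\sder_1$ is the \emph{smallest} consequence relation with the property that $\Gamma_i \sder_1 A_i$ for each $i \in I$, and $\sder_2$ is \emph{a} consequence relation with that same property, minimality forces $\sder_1 {\subseteq} \sder_2$.

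The step I expect to require the most care is making precise the appeal to minimality, i.e. the claim that $\sder_1 {\subseteq} \sder'$ for every consequence relation $\sder'$ satisfying $\Gamma_i \sder' A_i$ for all $i \in I$. This is really the defining universal property of ``the least consequence relation generated by $R$'' and is what licenses the whole argument; depending on how rigorously one wishes to present it, one can either invoke it directly from the definition of axiomatization given just before the proposition, or unpack it by a routine induction on the derivation of $\Gamma \sder_1 A$. In the latter presentation one would verify that the class of pairs derivable using reflexivity, the rule instances $(\Gamma_i^\sigma, A_i^\sigma)$ under substitutions $\sigma$, and the closure conditions (monotonicity and transitivity) are all preserved when passing from $\sder_1$ to $\sder_2$, using substitution-invariance of $\sder_2$ to handle the substituted rule instances. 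Since the entire closure machinery that generates $\sder_1$ is available in any consequence relation, and the generating instances lie in $\sder_2$ by hypothesis, every $\sder_1$-derivable pair is $\sder_2$-derivable.

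I would present the proof in the concise form: the forward direction in one sentence, and the backward direction by noting that $\sder_2$ is a consequence relation containing all generating pairs of $R$ and invoking the minimality built into the definition of axiomatization, so that $\sder_1{\subseteq}\sder_2$. No substantive obstacle arises beyond correctly identifying that the defining minimality of an axiomatized logic is exactly the tool needed.
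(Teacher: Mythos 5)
Your proof is correct and takes essentially the same route as the paper's own proof, which is a one-liner invoking exactly the minimality you identify: since $\sder_1$ is by definition the least consequence relation with $\Gamma_i\sder_1 A_i$ for all $i\in I$, the inclusion $\sder_1{\subseteq}\sder_2$ is equivalent to $\sder_2$ containing those pairs. Your optional unpacking by induction on derivations is just a more explicit rendering of the same universal property and adds nothing the paper's argument lacks.
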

\begin{proof}
Just note that $\Gamma_i\sder_2 A_i$ for every $i\in I$ is precisely equivalent to $\sder_1{\subseteq} \sder_2$ since, by assumption, $\sder_1$ is the least such logic.
\end{proof}

We should be careful here, as the logic defined by a given Nmatrix 
 may well fail to be finitely axiomatizable (see, for instance,~\cite{wronski3val}). Further, even if the logic is finitely axiomatizable, obtaining a concrete axiomatization may be very difficult (see, for instance,~\cite{rautenberg}). Still, if the axiomatizations are given, or obtainable with some ingenuity (as is often the case), one can easily test the equivalence of the given Nmatrices by checking that each rule of one axiomatization holds for the other Nmatrix\footnote{Note that, in this case, it is not relevant whether the two logics were defined by Nmatrices, but simply that their corresponding consequence problems are decidable.}.

\begin{corollary}\label{axcrit}
Let $\Mt_1,\Mt_2$ be finite $\Sigma$-Nmatrices. Given finite axiomatizations of both $\tuple{\Sigma,\sder_{\Mt_1}},\tuple{\Sigma,\sder_{\Mt_2}}$ it is decidable whether $\Mt_1,\Mt_2$ are equivalent.
\end{corollary}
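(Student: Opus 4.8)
The plan is to combine Proposition~\ref{axprop} with the decidability of the consequence problem for finite Nmatrices. The statement asserts that, given \emph{finite axiomatizations} $R_1=\{\frac{\,\Gamma_i\,}{A_i}:i\in I_1\}$ of $\tuple{\Sigma,\sder_{\Mt_1}}$ and $R_2=\{\frac{\,\Delta_j\,}{B_j}:j\in I_2\}$ of $\tuple{\Sigma,\sder_{\Mt_2}}$, one can decide whether $\Mt_1,\Mt_2$ are equivalent. First I would recall that equivalence of $\Mt_1$ and $\Mt_2$ means $\sder_{\Mt_1}={\sder_{\Mt_2}}$, which splits into the two inclusions $\sder_{\Mt_1}{\subseteq}\sder_{\Mt_2}$ and $\sder_{\Mt_2}{\subseteq}\sder_{\Mt_1}$; it therefore suffices to give an effective test for each inclusion and conjoin the two outcomes.

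For the inclusion $\sder_{\Mt_1}{\subseteq}\sder_{\Mt_2}$, I would invoke Proposition~\ref{axprop} with $\sder_1={\sder_{\Mt_1}}$ and $\sder_2={\sder_{\Mt_2}}$: since $R_1$ axiomatizes $\sder_{\Mt_1}$, we have $\sder_{\Mt_1}{\subseteq}\sder_{\Mt_2}$ if and only if $\Gamma_i\sder_{\Mt_2}A_i$ for every $i\in I_1$. Because the axiomatization $R_1$ is finite, $I_1$ is a finite index set and each premise set $\Gamma_i$ is finite, so this is a \emph{finite} conjunction of instances of the consequence problem $\mathsf{Csq}(\mathsf{NMatr})$ for the fixed finite Nmatrix $\Mt_2$. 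Each such instance $\Gamma_i\sder_{\Mt_2}A_i$ is decidable by the result of~\cite{Baaz2013} cited earlier in the excerpt, which establishes that $\mathsf{Csq}(\mathsf{NMatr})$ is decidable (indeed in $\mathbf{coNP}$). Hence the whole inclusion $\sder_{\Mt_1}{\subseteq}\sder_{\Mt_2}$ is decided by running this decision procedure on the finitely many pairs $(\Gamma_i,A_i)$ and taking the conjunction of the answers. Symmetrically, using $R_2$ as the axiomatization of $\sder_{\Mt_2}$, the reverse inclusion $\sder_{\Mt_2}{\subseteq}\sder_{\Mt_1}$ holds iff $\Delta_j\sder_{\Mt_1}B_j$ for every $j\in I_2$, again a finite conjunction of decidable $\mathsf{Csq}(\mathsf{NMatr})$ instances for the fixed finite Nmatrix $\Mt_1$.

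Putting the pieces together, the algorithm is: check all the finitely many conditions $\Gamma_i\sder_{\Mt_2}A_i$ ($i\in I_1$) and all the finitely many conditions $\Delta_j\sder_{\Mt_1}B_j$ ($j\in I_2$), each using the $\mathbf{coNP}$ decision procedure for $\mathsf{Csq}(\mathsf{NMatr})$, and answer $\mathsf{yes}$ precisely when all of these hold. By Proposition~\ref{axprop} this answer is correct, and since it is a finite Boolean combination of decidable tests it terminates, so equivalence is decidable. I expect there to be essentially no hard step: the entire content is in applying Proposition~\ref{axprop} in both directions to reduce each inclusion to finitely many consequence queries, and then appealing to the already-established decidability of $\mathsf{Csq}(\mathsf{NMatr})$. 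The only point that warrants a word of care—flagged already in the footnote to Proposition~\ref{axprop}—is that the argument does not use the matrix presentation of the logics at all beyond the decidability of their consequence relations; finiteness of the axiomatizations is what guarantees that only finitely many queries are made, and decidability of $\mathsf{Csq}$ is what makes each query effective.
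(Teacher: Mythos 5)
Your proof is correct and follows exactly the paper's intended argument: split equivalence into the two inclusions, apply Proposition~\ref{axprop} to reduce each inclusion to finitely many consequence queries against the other Nmatrix, and decide each query via the decidability of $\mathsf{Csq}(\mathsf{NMatr})$ from~\cite{Baaz2013}. Nothing is missing, and your closing remark matches the paper's own footnote that only decidability of the consequence relations (not the Nmatrix presentation itself) is used.
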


Let us provide some illustrations of this criterion at work. 

\begin{example}{(Eight distinct logics)}\label{eight}\\
Let $\Sigma$ be the signature with a single 1-place connective $\flat$ of Example~\ref{unarys}, and the Nmatrices introduced there. 
It is not difficult, for instance with the aide of techniques such as~\cite{SS,SYNTH,platypus}, to produce the following axiomatizations $R_\Mt$ for the logics defined by each 
$\Mt\in\{\Ut_\Sigma,\Mt_1,\ldots,\Mt_8\}\setminus\{\Mt_6\}$:
$R_{\Ut_\Sigma}=\emptyset$, $R_{{\Mt_1}}=\{r_{1}\}$, $R_{{\Mt_2}}=\{r_{2}\}$, 
$ R_{{\Mt_3}}=\{r_{3}\}$, $R_{{\Mt_4}}=\{r_4\}$, $R_{{\Mt_5}}=\{r_2,r_3\}$, $  R_{{\Mt_7}}=\{r_1,r_5,r_6\}$, $R_{{\Mt_8}}=\{r_7\}$, where the various rules are given below.
$$ \frac{\,p,\flat p\,}{q}\,_{r_{1}}\quad\quad 
\frac{p}{\,\flat p\,}\,_{r_{2}}\quad\quad
\frac{\,\flat p\,}{p}\,_{r_{3}}\quad\quad
\frac{\,\flat p\,}{q}\,_{r_{4}}\quad\quad
\frac{p}{\,\flat\flat p\,}\,_{r_{5}}\quad\quad
\frac{\,\flat\flat p\,}{ p}\,_{r_{6}}\quad\quad
  \frac{}{\,\flat p\,}\,_{r_{7}}
$$

Using Corollary~\ref{axcrit} it is easy to conclude that all these logics are distinct. Indeed, using Proposition~\ref{axprop} one can even assess their relationship more properly, as pictured in Figure~\ref{thepic} below.\smallskip

We have purposefully left Nmatrix $\Mt_6$ out of this analysis. We did it for two reasons: first, the techniques mentioned above do not readily provide us with a corresponding axiomatization; and second, it actually turns out that its logic coincides with one of the others. To prove this we will need to proceed differently.
\hfill$\triangle$
\end{example}

\subsection{Semantical approaches}

It is clear that the logic defined by a $\Sigma$-Nmatrix $\Mt$ is completely determined by its valuations $\Val(\Mt)$ and its set of designated values. Thus, ultimately, we can always try to assess whether given Nmatrices are equivalent by analyzing their valuations. Let us look at one example.

\begin{example}{(Another discrete logic)}\label{discrete}\\
Let $\Sigma$ be the signature with a single 1-place connective $\flat$ of Example~\ref{unarys}, and the Nmatrices introduced there.
We will show that $\Mt_6$ is actually equivalent to $\Ut_\Sigma$, both defining the discrete logic on the corresponding language. Since we know from Example~\ref{unconstrained} that $\sder_{\Ut_\Sigma}$ is discrete, it is immediate that 
$\sder_{\Ut_\Sigma}{\subseteq}\sder_{\Mt_6}$. To prove equality we need to show that given $\Gamma\cup\{A\}\subseteq L_\Sigma(P)$, if $\Gamma\sder_{\Mt_6} A$ then $A\in\Gamma$.

Fixed $A\in L_\Sigma(P)$, consider $v_A:L_\Sigma(P)\to\{0,1\}$ as defined below.
 $${v_A}(B)=\left\{\begin{array}{cl}
                       0 & \textrm{if $B=A$}\\
                       1 & \textrm{if $B\neq A$}
                       \end{array}\right.$$         
It is simple to check that $v_A\in\Val(\Mt_6)$. Just note that if $v_A(B)=0$ then $B=A$, and thus $v_A(\flat B)=v_A(\flat A)=1$ as $\flat A\neq A$. 
Therefore, if $A\notin\Gamma$ then $v_A(\Gamma)\subseteq\{1\}$ and $v_A(A)=0$, which shows that $\Gamma\not\sder_{\Mt_6} A$.
\hfill$\triangle$\smallskip
\end{example}

This example shows that semantic methods can be relatively \emph{ad hoc}, depending crucially on the concrete Nmatrices given and our ability to understand them. Still, there are some interesting structural criteria that can sometimes be used.

\subsubsection{Rexpansion homomorphisms}

Nmatrices can be related by an interesting notion of  
 homomorphism stemming from the notion of \emph{rexpansion}~\cite{rexpansions}.\smallskip

Let $\Mt_1 = \tuple{V_1, \cdot_1, D_1}$ and 
$\Mt_2 = \tuple{V_2, \cdot_2, D_2}$ be $\Sigma$-Nmatrices. A \emph{strict homomorphism} $h:\Mt_1\to\Mt_2$ consists of a function 
$h: V_1 \to V_2$ such that  $D_1 =h^{-1}(D_2)$ and,  
$h(\conn_{\Mt_1}(x_1, \ldots, x_k)) \subseteq \conn_{\Mt_2}(h(x_1), \ldots, h(x_k))$ for every $k\in\nats_0$, $\conn \in \Sigma^{(k)}$, and $x_1,\dots,x_k\in V_1$. 

Further, when $h$ is surjective, a strict homomorphism is said to be \emph{strongly-preserving} if 
$h(\conn_{\Mt_1}(x_1, \ldots, x_k))=\conn_{\Mt_2}(h(x_1), \ldots, h(x_k))$ for every  $k\in\nats_0$, $\conn \in \Sigma^{(k)}$, and $x_1,\dots,x_k\in V_1$.
We recall here Theorem 3.8 in~\cite{rexpansions}.

\begin{theorem} %[%, p10]
\label{thAZcompare}
If $h:\Mt_1\to\Mt_2$ is a strict homomorphism then
$\vdash_{\Mt_2}{\subseteq}\vdash_{\Mt_1}$. 
Further, if the homomorphism is strongly-preserving then
$\vdash_{\Mt_1}{=}\vdash_{\Mt_2}$.
\end{theorem}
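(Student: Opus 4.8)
The plan is to derive both inclusions by transporting valuations between the two Nmatrices along $h$. The whole argument hinges on the observation that composing a valuation of $\Mt_1$ with $h$ on the one hand, and lifting a valuation of $\Mt_2$ back along $h$ on the other, interact well with the two defining conditions of a strict homomorphism: the inclusion $h(\conn_{\Mt_1}(x_1,\dots,x_k))\subseteq\conn_{\Mt_2}(h(x_1),\dots,h(x_k))$ controls whether the valuation condition~(\ref{valcond}) is preserved, while $D_1=h^{-1}(D_2)$ lets us translate the designation status of a value back and forth.

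For the first part, $\vdash_{\Mt_2}{\subseteq}\vdash_{\Mt_1}$, I would push valuations of $\Mt_1$ forward. Given $v\in\Val(\Mt_1)$, I claim $h\circ v\in\Val(\Mt_2)$: for any $\conn\in\Sigma^{(k)}$ and $A_1,\dots,A_k$, applying $h$ to $v(\conn(A_1,\dots,A_k))\in\conn_{\Mt_1}(v(A_1),\dots,v(A_k))$ and invoking the inclusion condition gives $(h\circ v)(\conn(A_1,\dots,A_k))\in h(\conn_{\Mt_1}(v(A_1),\dots,v(A_k)))\subseteq\conn_{\Mt_2}((h\circ v)(A_1),\dots,(h\circ v)(A_k))$, which is exactly condition~(\ref{valcond}) for $h\circ v$. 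Now suppose $\Gamma\vdash_{\Mt_2}A$ and take $v\in\Val(\Mt_1)$ with $v(\Gamma)\subseteq D_1$. Since $D_1=h^{-1}(D_2)$ we get $(h\circ v)(\Gamma)\subseteq D_2$; because $h\circ v\in\Val(\Mt_2)$ and $\Gamma\vdash_{\Mt_2}A$, we obtain $(h\circ v)(A)\in D_2$, i.e.\ $v(A)\in h^{-1}(D_2)=D_1$. Hence $\Gamma\vdash_{\Mt_1}A$.

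For the second part I would establish the reverse inclusion $\vdash_{\Mt_1}{\subseteq}\vdash_{\Mt_2}$ under the assumption that $h$ is strongly-preserving. The idea is to lift any $w\in\Val(\Mt_2)$ to some $v\in\Val(\Mt_1)$ with $h\circ v=w$, and this is precisely where the equality condition and surjectivity are used. I would define $v$ by recursion on formula structure. On a variable or a nullary connective, surjectivity of $h$ supplies a value mapped by $h$ to the prescribed $w$-value. For $B=\conn(A_1,\dots,A_k)$, assuming each $v(A_i)$ already chosen with $h(v(A_i))=w(A_i)$, I use $w(B)\in\conn_{\Mt_2}(w(A_1),\dots,w(A_k))=\conn_{\Mt_2}(h(v(A_1)),\dots,h(v(A_k)))=h(\conn_{\Mt_1}(v(A_1),\dots,v(A_k)))$, where the final equality is strong preservation; this yields a witness $y\in\conn_{\Mt_1}(v(A_1),\dots,v(A_k))$ with $h(y)=w(B)$, and I set $v(B)=y$. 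By construction $v\in\Val(\Mt_1)$ and $h\circ v=w$. Then $w(\Gamma)\subseteq D_2$ forces $v(\Gamma)\subseteq h^{-1}(D_2)=D_1$, so $\Gamma\vdash_{\Mt_1}A$ gives $v(A)\in D_1$, whence $w(A)=h(v(A))\in D_2$, yielding $\Gamma\vdash_{\Mt_2}A$. Together with the first part this gives $\vdash_{\Mt_1}{=}\vdash_{\Mt_2}$.

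I expect the main obstacle to be the lifting construction in the second part. One must check that the recursive choice of witnesses actually yields a well-defined function on all of $L_\Sigma(P)$, which relies on unique readability of formulas, and that it genuinely satisfies condition~(\ref{valcond}) at \emph{every} formula rather than only at those subformulas whose values happen to lie in a favourable part of $V_2$. This is exactly the point where the equality guaranteed by a strongly-preserving homomorphism, as opposed to the mere inclusion available for an arbitrary strict homomorphism, becomes indispensable: without it the set $h(\conn_{\Mt_1}(v(A_1),\dots,v(A_k)))$ need not contain $w(B)$, and the required witness $y$ might fail to exist.
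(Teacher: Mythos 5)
Your proof is correct, and it is essentially the standard argument: the paper itself states this theorem without proof (it is recalled as Theorem~3.8 of the rexpansions paper of Avron and Zohar), and the proof there transports valuations along $h$ exactly as you do --- pushing $v\mapsto h\circ v$ forward for the strict case, and recursively lifting each $w\in\Val(\Mt_2)$ to some $v\in\Val(\Mt_1)$ with $h\circ v=w$ for the strongly-preserving case. One small slip in your base case: for a $0$-place connective $\conn$ the lifted value $v(\conn)$ must lie in $\conn_{\Mt_1}$, so mere surjectivity of $h$ does not suffice; what you need is the $k=0$ instance of strong preservation, $h(\conn_{\Mt_1})=\conn_{\Mt_2}$, which is precisely the inductive step you already wrote, so nullary connectives should be handled there rather than alongside the variables. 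With that reading the lift is well defined and the whole argument goes through.
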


Paper~\cite{rexpansions} contains several interesting applications of this result. We shall provide a few more, along also with an illustration of its limitations.

\begin{example} (Revisiting inclusions)\\
 We revisit the Nmatrices of Example~\ref{unarys}. Let $h:\{0,1\}\to \{0,1\}$ be the identity function. It is easy to see that $h$ defines strict homomorphisms
 $h:\Mt\to\Mt'$ for all pairs $\tuple{\Mt,\Mt'}$ in
 $$\{\tuple{\Mt_i,\Ut_\Sigma}:i\in\{1,\dots,8\}\}\cup\{\tuple{\Mt_i,\Mt_j}:(i,j)\in 
 \{(7,1),(8,2),(5,2),(5,3),(4,3)\}
 \},$$
 simply because 
   $\flat_{\Mt}(x)\subseteq \flat_{\Mt'}(x)$ for every $x\in \{0,1\}$.
   Thus, Theorem~\ref{thAZcompare} explains by itself all the inclusion arrows depicted in Figure~\ref{thepic} (which we had already justified in Example~\ref{eight}).\smallskip

Note, however, that $h:\Mt_6\to \Ut_\Sigma$ is not strongly preserving, and that $h$ does not even constitute a strict homomorphism from $\Ut_\Sigma$ to $\Mt_6$ (in both cases, simply because $0\notin \flat_{\Mt_6}(0)=\{1\}$), and therefore Theorem~\ref{thAZcompare} cannot be used to establish the equivalence of these two Nmatrices.
\hfill$\triangle$\smallskip
\end{example}

\begin{example} (Tilded inclusions and equivalences)\label{tildeight}\\
We extend Example~\ref{tilded}, by considering the tilded versions, according to Definition~\ref{def-infect}, of all the Nmatrices of Example~\ref{unarys}. Note that in all cases the tilded Nmatrices have both $1,\widetilde{0}$ as designated values. The corresponding truth-tables are given below.

\begin{center}
\begin{tabular}{C{7pt}|C{8mm}|C{8mm}|C{8mm}|C{8mm}|C{8mm}|C{8mm}|C{8mm}|C{8mm}|C{8mm}}
 & ${\flat_{\widetilde{\Ut}_\Sigma}}$ & ${\flat_{\widetilde{\Mt}_1}}$  & ${\flat_{\widetilde{\Mt}_2}}$  & ${\flat_{\widetilde{\Mt}_3}}$  & ${\flat_{\widetilde{\Mt}_4}}$  & ${\flat_{\widetilde{\Mt}_5}}$  & ${\flat_{\widetilde{\Mt}_6}}$  & ${\flat_{\widetilde{\Mt}_7}}$  & ${\flat_{\widetilde{\Mt}_8}}$ \\[1mm]
\hline
$0$& $ 0,\widetilde{0},1 $ & $ 0,\widetilde{0},1 $ & $ 0,\widetilde{0},1 $ & $ 0,\widetilde{0} $ & $ 0,\widetilde{0} $ & $ 0,\widetilde{0} $ & $ 1 $& $ 1 $& $ 1 $\\[1mm]
$\widetilde{0}$& $ 0,\widetilde{0},1 $ & $ 0,\widetilde{0},1 $ & $ 0,\widetilde{0},1 $ & $ 0,\widetilde{0} $ & $ 0,\widetilde{0} $ & $ 0,\widetilde{0} $ & $ 1 $& $ 1 $& $ 1 $\\[1mm]
$1$ &$ 0,\widetilde{0},1 $ &$ 0,\widetilde{0} $ &$ 1 $ &$ 0,\widetilde{0},1 $ &$ 0,\widetilde{0} $ &$ 1 $ &$ 0,\widetilde{0},1 $ &$ 0,\widetilde{0} $ &$ 1 $
\end{tabular}

\end{center}

Consider the functions $h:\{0, \widetilde{0}, 1\} \to \{0, 1\}$, and $f,g:\{0, 1\} \to \{0,\widetilde{0}, 1\}$ defined as follows.
$$h(x)=
\begin{cases}
 0 & \text{ if }x=0\\
 1 & \text{ if }x\in \{\tilde{0},1\}\\
\end{cases}
\qquad 
f(x)=
\begin{cases}
 0 & \text{ if }x=0\\
 \tilde{0} & \text{ if }x=1\\
\end{cases}
\qquad 
g(x)=
\begin{cases}
 0 & \text{ if }x=0\\
 1 & \text{ if }x=1\\
\end{cases}
$$

For any $\Mt \in \{\Ut_\Sigma, \Mt_1,\Mt_3,\Mt_4\}$, 
$h:\widetilde{\Mt}\to\Ut_\Sigma$ is a strongly-preserving strict homomorphism (because, for all $x\in\{0,\widetilde{0},1\}$, $\flat_{\widetilde{\Mt}}(x)$ contains both designated and undesignated values). 
Consequently, by Theorem~\ref{thAZcompare}, we conclude that 
$\vdash_{\widetilde{\Mt}} {=} \vdash_{\Ut_\Sigma}$.\smallskip

For any $\Mt \in \{\Ut_\Sigma, \Mt_1,\ldots,\Mt_5\}$, 
$f:\Ut_\Sigma\to\widetilde{\Mt}$ is a strict homomorphism (because, for all $x\in\{0,\widetilde{0}\}$, $\flat_{\widetilde{\Mt}}(x)$ contains both designated and undesignated values). 
By Theorem~\ref{thAZcompare}, we conclude that 
$\vdash_{\widetilde{\Mt}}{\subseteq}\vdash_{\Ut_\Sigma}$, and since 
$\vdash_{\Ut_\Sigma}$ is the smallest logic over $\Sigma$ they must coincide, that is, $\vdash_{\widetilde{\Mt}} {=} \vdash_{\Ut_\Sigma}$.\smallskip
%...

For $\Mt \in \{\Ut_\Sigma, \Mt_1,\Mt_6, \Mt_7\}$, 
$g:\Mt_7\to\widetilde{\Mt}$ is a strict homomorphism (because $1\in\flat_{\widetilde{\Mt}}(0)$ and $0\in\flat_{\widetilde{\Mt}}(1)$).
Theorem~\ref{thAZcompare} thus implies that $\vdash_{\widetilde{\Mt}} {\subseteq} \vdash_{\Mt_7}$. 
Since $\vdash_{\Mt_7}$ corresponds to the negation fragment of classical logic, 
which has no theorems, it follows from Proposition~\ref{infect} that $\vdash_{\widetilde{\Mt}}$ also has no theorems, and $\vdash_{\widetilde{\Mt}} {=} \vdash_{\Ut_\Sigma}$.\smallskip

It is actually possible to conclude, simply by inspection of the axiomatizations provided in Example~\ref{eight}, or also semantically, that none of the logics defined by the given Nmatrices have theorems, with the exception of $\Mt_8$. Thus, according to  Proposition~\ref{infect}, their tilded versions must all be equivalent to $\Ut_\Sigma$.
We conclude that for all $\Mt \in \{\Ut_\Sigma, \Mt_1, \ldots, \Mt_7\}$, we have $\vdash_{\widetilde{\Mt}} {=} \vdash_{\Ut_\Sigma} {\neq} \vdash_{\widetilde{\Mt}_8}$, and also that 
if ${\Mt}\neq {\Ut_\Sigma}$ then 
$\vdash_{\Mt} {\neq} \vdash_{\widetilde{\Mt}}$.
It is important to note that Proposition~\ref{infect} was essential here, as Theorem~\ref{thAZcompare} could not be directly applied to show that $\vdash_{\widetilde{\Mt}_7} {=} \vdash_{\Ut_\Sigma}$ (because $0\in \flat_{\Ut_\Sigma}(0)$ but
 $0 \notin \flat_{\Mt_7}(0)$).\smallskip

Finally, $h:\widetilde{\Mt}_8\to\Mt_8$ is a strongly-preserving strict homomorphism. According to Theorem~\ref{thAZcompare}, we conclude that 
$\vdash_{\Mt_8} {=} \vdash_{\widetilde{\Mt}_8}$.\smallskip

The overall picture of the relationships between all these logics is depicted in Figure~\ref{thepic}.\hfill$\triangle$\smallskip
\end{example}

These examples are, of course, very simple. Still, the variety of possibilities comes as no surprise, since we have shown that ${\mathsf{Eqv}}(\mathsf{Nmatr})$ is undecidable, while checking the existence of suitable homomorphisms between finite Nmatrices is decidable.

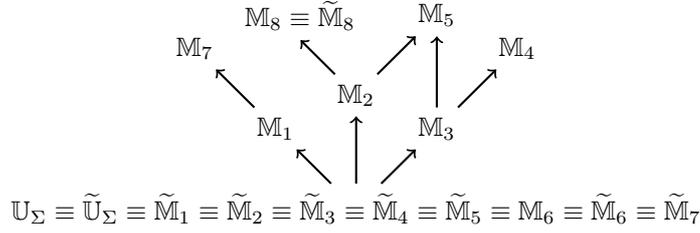
\begin{figure}[hb]
\begin{center}
\begin{tikzpicture}[node distance={15mm}, thick, main/.style = {}] 
\node[main] (1) {$\Ut_\Sigma\equiv\widetilde{\Ut}_\Sigma\equiv\widetilde{\Mt}_1\equiv\widetilde{\Mt}_2\equiv\widetilde{\Mt}_3\equiv\widetilde{\Mt}_4\equiv\widetilde{\Mt}_5\equiv\Mt_6\equiv\widetilde{\Mt}_6\equiv\widetilde{\Mt}_7$}; 
\node[main] (2) [above left of=1] {$\Mt_1$}; 
\node[main] (3) [above of=1] {$\Mt_2$}; 
\node[main] (4) [above right of=1] {$\Mt_3$}; 
\node[main] (5) [above left of=2] {$\Mt_7$}; 
\node[main] (6) [above left of=3] {$\;\;\;\quad \Mt_8\equiv\widetilde{\Mt}_8$}; 
\node[main] (7) [above right of=3] {$\Mt_5$}; 
\node[main] (8) [above right of=4] {$\Mt_4$}; 
\draw[->] (1) -- (2); 
\draw[->] (1) -- (3); 
\draw[->] (1) -- (4); 
\draw[->] (2) -- (5);
\draw[->] (3) -- (6); 
\draw[->] (3) -- (7); 
\draw[->] (4) -- (7); 
\draw[->] (4) -- (8); 
\end{tikzpicture} 
\end{center}
\caption{Equivalences ($\equiv$) and inclusions ($\longrightarrow$) among the eight logics defined by the eighteen Nmatrices of Examples~\ref{unarys} and~\ref{tildeight}.}\label{thepic}
\end{figure}

\subsubsection{ Bivaluations }
 
 We conclude with a very general semantical criterion, rooted in bivaluations, that is necessary and sufficient to prove or disprove the equality of two logics. Although this criterion does not yield a direct procedure, it provides a faithful conceptual framework for understanding the problem.\smallskip

 Fixed a signature $\Sigma$, bivaluations
are functions $\bval:L_{\Sigma}(P)\to\{0,1\}$.
Any Nmatrix induces a set of bivaluations
 $\BVal(\Mt)=\{\bval_v:v\in \Val(\Mt)\}$ where, for each formula $A$,
$$b_v(A)=
\begin{cases}
 1& \mbox{ if }v(A)\in D\\
0 &\mbox{ if } v(A)\notin D\\
\end{cases}.
$$

    Given $\bvals\subseteq\{0,1\}^{L_{\Sigma}(P)}$, let its \emph{meet-closure} be the set 
$\bvals^\sqcap=\{\sqcap{\bvals'}:\bvals'\subseteq \bvals\}$ with each \emph{meet} $\sqcap{\bvals'}$ being the bivaluation such that $\sqcap{\bvals'}(A)=1$ precisely if $b(A)=1$ for every $b\in \bvals'$, for each $A\in L_{\Sigma}(P)$.

The following is a well-known simple result about theories of a logic, rephrased here as bivaluations (given $b\in\BVal(\Mt)$, it is straightforward to check that $\Omega_b=b^{-1}(1)=\{A\in L_\Sigma(P):b(A)=1\}$ is a theory of $\sder_\Mt$, i.e., if $\Omega_b\sder_\Mt A$ then $A\in\Omega_b$).

\begin{proposition} 
 Given $\Sigma$-Nmatrices $\Mt_1$ and $\Mt_2$,
  we have that
  $$\sder_{\Mt_1} {\subseteq} \sder_{\Mt_2} \textrm{ {if and only if} } \BVal(\Mt_2)\subseteq\BVal(\Mt_1)^\sqcap.$$
\end{proposition}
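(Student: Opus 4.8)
The plan is to reduce everything to the bivaluational reformulation of consequence. First I would record the elementary observation that, by the very definition of $\sder_\Mt$, one has $\Gamma\sder_\Mt A$ if and only if for every $b\in\BVal(\Mt)$ with $\Gamma\subseteq\Omega_b$ one also has $A\in\Omega_b$; that is, the logic of $\Mt$ is exactly the entailment relation determined by the set $\BVal(\Mt)$ of its induced bivaluations. Alongside this I would isolate the two bookkeeping facts that drive the argument: (i) for any $\bvals'\subseteq\{0,1\}^{L_\Sigma(P)}$ the meet satisfies $\Omega_{\sqcap\bvals'}=\bigcap_{b\in\bvals'}\Omega_b$, immediate from the definition of $\sqcap$, with the empty meet giving $\Omega_{\sqcap\emptyset}=L_\Sigma(P)$; and (ii) entailment is antitone in the bivaluation set, so enlarging a set of bivaluations can only shrink the induced relation.

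The key lemma I would prove is that passing to the meet-closure does not change the induced entailment: for any $\bvals$, the relation determined by $\bvals$ coincides with the one determined by $\bvals^\sqcap$. One inclusion is immediate from $\bvals\subseteq\bvals^\sqcap$ together with (ii). For the other, assuming the premise holds relative to $\bvals$, take any $\sqcap\bvals'\in\bvals^\sqcap$ with $\Gamma\subseteq\Omega_{\sqcap\bvals'}$; fact (i) forces $\Gamma\subseteq\Omega_b$ for each $b\in\bvals'$, so the premise applied to those $b$ and re-intersected via (i) yields the conclusion at $\sqcap\bvals'$.

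With these in place the proposition becomes the instance $\bvals=\BVal(\Mt_1)$, $\mathsf{C}=\BVal(\Mt_2)$ of the general statement that the entailment determined by $\bvals$ is contained in the one determined by $\mathsf{C}$ if and only if $\mathsf{C}\subseteq\bvals^\sqcap$. The easy direction follows by combining $\mathsf{C}\subseteq\bvals^\sqcap$ with (ii) and the meet-closure lemma. The converse is where the real content lies: given $b\in\BVal(\Mt_2)$, I would set $T=\Omega_b$ and consider the family $\bvals'=\{b'\in\BVal(\Mt_1):T\subseteq\Omega_{b'}\}$, aiming to show $\sqcap\bvals'=b$, which places $b$ in $\BVal(\Mt_1)^\sqcap$ as required. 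The inclusion $T\subseteq\Omega_{\sqcap\bvals'}$ is trivial, and the crux is the reverse: a formula $A$ lies in $\Omega_{\sqcap\bvals'}$ exactly when $T\sder_{\Mt_1}A$; pushing this through the hypothesis $\sder_{\Mt_1}\subseteq\sder_{\Mt_2}$ gives $T\sder_{\Mt_2}A$, and then evaluating this entailment at the single bivaluation $b$ itself, for which $T\subseteq\Omega_b$ holds by the definition of $T$, forces $A\in\Omega_b=T$.

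The main obstacle I anticipate is precisely this last, slightly self-referential step: recognizing that membership in $\Omega_{\sqcap\bvals'}$ is literally the statement $T\sder_{\Mt_1}A$, and then closing the loop by feeding the resulting $\BVal(\Mt_2)$-entailment back into the very bivaluation $b$ we started from. Everything else is routine once the entailment-versus-bivaluation dictionary and the behaviour of meets under $\Omega_{(-)}$ are set up; I would also take care to treat the empty-meet case, so that the top bivaluation is correctly accounted for.
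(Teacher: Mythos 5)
Your proof is correct, and while its two directions ultimately rest on the same two facts as the paper's proof, you package them quite differently. The paper argues directly on the Nmatrices: for the hard direction it fixes $b\in\BVal(\Mt_2)$, uses the hypothesis together with the observation (made just before the proposition) that $\Omega_b$ is a $\sder_{\Mt_2}$-theory to produce, for each $A\notin\Omega_b$, a witnessing valuation $v_A\in\Val(\Mt_1)$ designating $\Omega_b$ but not $A$, and then checks $b=\sqcap\{b_{v_A}:A\notin\Omega_b\}$; for the easy direction it verifies by hand that any $v_2\in\Val(\Mt_2)$ whose bivaluation is a meet of $\Mt_1$-bivaluations preserves every $\Mt_1$-entailment. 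You instead abstract away from Nmatrices entirely: after the dictionary identifying $\sder_\Mt$ with the entailment determined by $\BVal(\Mt)$, you prove a standalone meet-closure lemma (the entailment determined by $\bvals$ equals that determined by $\bvals^\sqcap$) and deduce the proposition as an instance of a purely combinatorial statement about arbitrary sets of bivaluations. Your hard direction takes the meet of \emph{all} $\Mt_1$-bivaluations above $\Omega_b$ rather than one counterexample per excluded formula, and your ``self-referential'' evaluation of $T\sder_{\Mt_2}A$ at $b$ itself is exactly the paper's theory-hood remark in disguise; the two computations coincide. What your organization buys is generality and reusability: it makes explicit that nothing about Nmatrices is used beyond the bivaluation dictionary, so the criterion holds for any two logics presented by sets of bivaluations, and the meet-closure lemma isolates the single structural reason behind both directions (it also handles the empty-meet case uniformly, as you note). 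What the paper's inline version buys is concreteness and brevity: it exhibits the witnessing valuations explicitly in terms of the Nmatrix semantics, which is the form most useful for the worked examples that follow it.
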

\begin{proof}
Let $\Mt_1 = \tuple{V_1, \cdot_1, D_1}$ and 
$\Mt_2 = \tuple{V_2, \cdot_2, D_2}$.\smallskip

Assume that $\sder_{\Mt_1} {\subseteq} \sder_{\Mt_2}$, and take $b\in \BVal(\Mt_2)$. If $\Omega_b\sder_{\Mt_1} A$ then $\Omega_b\sder_{\Mt_2} A$, and thus $A\in\Omega_b$. Thus, for each $A\notin\Omega_b$ it must be the case that $\Omega_b\not\sder_{\Mt_1} A$, and thus there exists $v_A\in\Val(\Mt_1)$ such that $v_A(\Omega_b)\subseteq D_1$ and $v_A(A)\notin D_1$. Denoting each $b_{v_A}$ simply by $b_A$, note that $b_A(\Omega_b)\subseteq\{1\}$ and $b_A(A)=0$. We just need to check that $b=\sqcap\{b_A:A\notin\Omega_b\}$. Indeed, for any formula $B$, $b(B)=1$ iff $B\in\Omega_b$ iff $b_A(B)=1$ for all $A\in\Omega_b$.\smallskip

Reciprocally, assume that $\BVal(\Mt_2)\subseteq\BVal(\Mt_1)^\sqcap$, take $\Gamma\cup\{A\}\subseteq L_\Sigma(P)$ such that $\Gamma\sder_{\Mt_1} A$, and let $v_2\in\Val(\Mt_2)$ be such that $v_2(\Gamma)\subseteq D_2$. Clearly, $b_{v_2}(\Gamma)\subseteq\{1\}$ and $b_{v_2}=\sqcap \bvals$ for some set $\bvals\subseteq\BVal(\Mt_1)$. This means that there exists ${\mathsf V}\subseteq\Val(\Mt_1)$ such that $\bvals=\{b_{v_1}:v_1\in {\mathsf V}\}$, and for any formula $B$ we have that $v_2(B)\in D_2$ iff $b_{v_2}(B)=1$ iff 
$b_{v_1}(B)=1$ for all $v_1\in {\mathsf V}$ iff $v_1(B)\in D_1$ for all $v_1\in {\mathsf V}$. Since $v_2(\Gamma)\subseteq D_2$, it follows that $v_1(\Gamma)\subseteq D_1$ for all $v_1\in {\mathsf V}$. Therefore, given that $\Gamma\sder_{\Mt_1} A$ we conclude that $v_1(A)\in D_1$ for all $v_1\in {\mathsf V}$. Consequently $v_2(A)\in D_2$, and it follows that $\Gamma\sder_{\Mt_2} A$.
\end{proof}

\begin{corollary}\label{bivs}
Given $\Sigma$-Nmatrices $\Mt_1$ and $\Mt_2$,
  we have that
  $$\sder_{\Mt_1} {=} \sder_{\Mt_2} \textrm{ {if and only if} } \BVal(\Mt_1)^\sqcap=\BVal(\Mt_2)^\sqcap.$$
\end{corollary}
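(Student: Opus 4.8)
The plan is to derive this corollary directly from the preceding Proposition, combined with the fact that the meet-closure operation $(\cdot)^\sqcap$ is a \emph{closure operator} on sets of bivaluations. First I would record the elementary observation that $\sder_{\Mt_1} = \sder_{\Mt_2}$ holds exactly when both inclusions $\sder_{\Mt_1} \subseteq \sder_{\Mt_2}$ and $\sder_{\Mt_2} \subseteq \sder_{\Mt_1}$ hold. Applying the Proposition to each inclusion separately turns this into the conjunction of the two conditions $\BVal(\Mt_2) \subseteq \BVal(\Mt_1)^\sqcap$ and $\BVal(\Mt_1) \subseteq \BVal(\Mt_2)^\sqcap$. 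Writing $\bvals_1 = \BVal(\Mt_1)$ and $\bvals_2 = \BVal(\Mt_2)$ for brevity, it then remains to establish the purely set-theoretic equivalence
$$\bvals_2 \subseteq \bvals_1^\sqcap \ \textrm{ and } \ \bvals_1 \subseteq \bvals_2^\sqcap \quad\textrm{if and only if}\quad \bvals_1^\sqcap = \bvals_2^\sqcap.$$

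The key intermediate step is to verify that $(\cdot)^\sqcap$ enjoys the three defining properties of a closure operator. Extensivity, $\bvals \subseteq \bvals^\sqcap$, is immediate since each $b \in \bvals$ equals the singleton meet $\sqcap\{b\}$. Monotonicity, namely that $\bvals \subseteq \bvals'$ implies $\bvals^\sqcap \subseteq (\bvals')^\sqcap$, follows because every subset of $\bvals$ is a subset of $\bvals'$, so every meet drawn from $\bvals$ is also a meet drawn from $\bvals'$. The only point demanding a small argument is idempotency, $(\bvals^\sqcap)^\sqcap = \bvals^\sqcap$: given a meet $\sqcap\mathsf{C}$ of a family $\mathsf{C} \subseteq \bvals^\sqcap$, where each $c \in \mathsf{C}$ is itself a meet $\sqcap \bvals_c$ of some $\bvals_c \subseteq \bvals$, I would unfold the definition pointwise on each formula $A$ to see that $(\sqcap\mathsf{C})(A) = 1$ iff $d(A) = 1$ for every $d \in \bigcup_{c\in\mathsf{C}} \bvals_c$, so that $\sqcap\mathsf{C} = \sqcap\bigl(\bigcup_{c\in\mathsf{C}} \bvals_c\bigr)$ already belongs to $\bvals^\sqcap$, since $\bigcup_{c\in\mathsf{C}} \bvals_c \subseteq \bvals$.

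With these three properties in hand, the displayed equivalence is formal. For the forward direction, from $\bvals_2 \subseteq \bvals_1^\sqcap$ I would apply monotonicity and idempotency to obtain $\bvals_2^\sqcap \subseteq (\bvals_1^\sqcap)^\sqcap = \bvals_1^\sqcap$, and symmetrically $\bvals_1^\sqcap \subseteq \bvals_2^\sqcap$ from $\bvals_1 \subseteq \bvals_2^\sqcap$, whence $\bvals_1^\sqcap = \bvals_2^\sqcap$. For the converse, assuming $\bvals_1^\sqcap = \bvals_2^\sqcap$, extensivity at once gives $\bvals_2 \subseteq \bvals_2^\sqcap = \bvals_1^\sqcap$ and $\bvals_1 \subseteq \bvals_1^\sqcap = \bvals_2^\sqcap$, as required.

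I do not anticipate any genuine obstacle, since the substantive content resides entirely in the Proposition and the remaining work is the routine verification that meet-closure is a closure operator. The only place demanding any care is the pointwise bookkeeping in the idempotency argument, together with the mild subtlety that the empty meet $\sqcap\emptyset$ is the constant-$1$ bivaluation and must be allowed to sit inside every $\bvals^\sqcap$; neither of these disrupts the argument.
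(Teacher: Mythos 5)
Your proof is correct and follows exactly the route the paper intends: the corollary is stated without proof as an immediate consequence of the preceding Proposition, applied once in each direction, combined with the routine set-theoretic fact that $(\cdot)^\sqcap$ is a closure operator (extensive, monotone, idempotent). Your careful verification of idempotency via $\sqcap\mathsf{C} = \sqcap\bigl(\bigcup_{c\in\mathsf{C}}\bvals_c\bigr)$ and your remark about the empty meet simply fill in details the paper leaves implicit.
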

 
 This characterization gives us a necessary and sufficient criterion, which can be used as a ultimate resource.
 
\begin{example} ($\Mt_6$ revisited)\\
Let us revisit Example~\ref{discrete}. An alternative proof of the fact that $\Mt_6$ and $\Ut_\Sigma$ are equivalent would be to use Corollary~\ref{bivs}. Since both Nmatrices are two-valued, their valuations coincide with their bivaluations. Clearly, $\Val(\Ut_\Sigma) =  \Val(\Ut_\Sigma)^\sqcap = \{0,1\}^{L_{\Sigma}(P)}$. Therefore, it suffices to show that $\{0,1\}^{L_{\Sigma}(P)} \subseteq  \Val(\Mt_6)^\sqcap$.

Let $v:L_\Sigma(P)\to\{0,1\}$ be an arbitrary (bi)valuation. Consider the set of (bi)valuations
$\bvals = \{v_A: A\in v^{-1}(0)\}$, where each $v_A\in\Val(\Mt_6)$ is defined as in Example~\ref{discrete} and satisfies, for any formula $B$, $v_A(B)=1$ iff $B\neq A$. 
Note that $v(B)=1$ iff $B\notin v^{-1}(0)$ iff $v_A(B)=1$ for all $v_A\in \bvals$, i.e., $v=\sqcap \bvals$.
 \hfill$\triangle$\smallskip
\end{example}
%\end{lemma}

\section{Conclusion}\label{sec6:conc}

We have shown that the price to pay for the expressive gain allowed by non-deterministic logical matrices includes the loss of the ability to decide logical equivalence. 
This undecidability result implies that looking for counterexamples to the logical equivalence of two given finite Nmatrices can be very difficult in practice, let alone proving their logical equivalence, if that is the case. \smallskip

These results do not hinder the usefulness of Nmatrices, but do require additional expertise in dealing with their equivalence problem. We have briefly surveyed some possible pathways to deal with these problems, but further research is needed. Namely, it will be very useful to identify subclasses $\textsf{Matr}\subseteq\mathcal{M}\subseteq\textsf{NMatr}$ for which the problem $\mathsf{Eqv}(\textsf{NMatr})$ is still decidable. Also, alternative sufficient conditions for logical equivalence may result from a deeper study of the algebraic structure of Nmatrices and their homomorphisms, namely in the lines of~\cite{CZE}. Further, the scope of the idea underlying the equivalence criterion of Corollary~\ref{axcrit} and used above in Example~\ref{eight} seems to be extensible to richer notions of axiomatizability. 

For instance, we know that every finite logical matrix~\cite{SS}, and also all every \emph{monadic} finite Nmatrix~\cite{SYNTH} admits a finite multiple-conclusion axiomatization. We have shown in~\cite{FilipeNCL} that determining whether a given finite Nmatrix is monadic is undecidable, but given formulas witnessing its monadicity we know how to algorithmically produce a corresponding multiple-conclusion axiomatization.
Furthermore, under certain conditions, finite multiple-conclusion axiomatizations can be converted into finite single-conclusion axiomatizations. For instance, \cite[Theorem~5.37]{SS} shows that this is possible when the underlying logic expresses a \emph{disjunction-like} connective. %
Even if there is no disjunction, we have that by  \cite[Theorem~5.16]{SS}
we have that if
 $R=\{\frac{\,\Gamma_i\,}{\Delta_i}:i\in I\}$ is a multiple-conclusion axiomatization of $\tuple{\Sigma,\sder_1}$ 
 then $\sder_1{\subseteq} \sder_2$ if, and only if,
for every $\Theta\cup \{A\} \subseteq L_{\Sigma}(P)$, $i\in I$ and $\sigma:P\to L_\Sigma(P)$ we have that
\begin{center}
 $\Theta,B^\sigma\sder_1 A$ for every $B\in \Delta_i$ 
implies
 $\Theta,\Gamma_i^\sigma\sder_2 A$.
 \end{center}

Similar ideas may be workable, using with other notions of axiomatizability, for instance using sequent or tableaux calculi, as long as there is a way to convert them into tests to the Nmatrix logics. \smallskip

Finally, it is worth noting that if we adopt a non-Tarskian multiple-conclusion notion of logic, as proposed in~\cite{Sco:CaA:74,SS}, the work developed here is inconclusive. Indeed, the problem of determining whether two given finite Nmatrices characterize the same multiple-conclusion logic (which does not seem simpler) remains open, as one can easily note that the Nmatrix $\Ct$ obtained from a given counter machine $\mC$ in Section~\ref{sec3} is not saturated (in the sense of~\cite{newfibring}), and it is unclear how to replace the construction in a suitable way.

\bibliographystyle{plain}
\bibliography{biblio}

\end{document}